\newtheorem{thm}{Theorem}[section]
\newtheorem{cor}[thm]{Corollary}
\newtheorem{lem}[thm]{Lemma}
\theoremstyle{definition}
\newtheorem{defn}[thm]{Definition}
\theoremstyle{remark}
\numberwithin{equation}{section}
\newcounter{stepnum}
\def\bee{\begin{eqnarray}}
\def\beee{\begin{eqnarray*}}
\def\eee{\end{eqnarray}}
\def\eeee{\end{eqnarray*}}
\def\ba{\begin{array}}
\def\ea{\end{array}}
\def\R{\mathbb R}
\begin{document}

\title[Lorentzian harmonic maps]{Partial regularity of harmonic maps from a Riemannian manifold into a Lorentzian manifold}%

\author[Li]{Jiayu  Li}
\address{School of Mathematics Sciences, University of Science and Technology of China\\ Hefei 230026,
Anhui, China}
\email{jiayuli@ustc.edu.cn}

\author[Liu]{Lei Liu}%
\address{Max Planck Institute for Mathematics in the Sciences\\ Inselstrasse 22\\ 04103 Leipzig, Germany}
\email{leiliu@mis.mpg.de or llei1988@mail.ustc.edu.cn} %

\thanks{The research is supported by  NSF in China No 11426236, 11131007.}

\subjclass[2010]{53C43, 58E20}
\keywords{Lorentzian harmonic map, Stationary, Partial regularity, Blow-up.}

\date{\today}
\begin{abstract}
In this paper, we will study the partial regularity theorem for stationary harmonic maps from a Riemannian manifold into a Lorentzian manifold. For a weakly stationary harmonic map $(u,v)$ from a smooth bounded open domain $\Omega\subset\R^m$ to a Lorentzian manifold with Dirichlet boundary condition, we prove that it is smooth outside a closed set whose $(m-2)$-dimension Hausdorff measure is zero. Moreover, if the target manifold $N$ does not admit any harmonic sphere $S^l$, $l=2,...,m-1$, we will show $(u,v)$ is smooth.
\end{abstract}
\maketitle
\section{introduction}

Suppose $(M,g)$ and $(N,h_N)$ are two compact Riemannian manifolds of dimension $m$ and $n$ respectively. For a map $u\in C^1(M, N)$, the energy functional of $u$ is defined as
\begin{equation}
E(u)=\frac{1}{2}\int_M |\nabla u|^2dvol_g.
\end{equation}
A critical point of the energy functional $E$ is called a harmonic map. By Nash's embedding theorem, we can embed $N$ isometrically into some Euclidian space $\R^K$ and the corresponding Euler-Lagrange equation is
$$
\Delta_g u=A(u)(\nabla u,\nabla u),
$$
where $\Delta_g$ is the Laplace-Beltrami operator on $M$ with respect to $g$ and $A$ is the second fundamental form of $N\subset \mathbb{R}^K$.

Harmonic map is a very important notion in geometric analysis which has been widely studied in the past decades. Physically, harmonic map comes from the nonlinear sigma model, which plays an important role in quantum field and string theory. From the perspective of general relativity, it is nature to consider that the target of harmonic maps are Lorentzian manifolds. Geometrically, the link between harmonic maps into $S^4_1$ and the conformal Gauss maps of Willmore surface in $S^3$ also leads to such harmonic maps \cite{bryant}. The work on minimal surfaces in anti-de-Sitter spaces and its applications in theoretical physics also shows the importance of such maps \cite{aldaymaldacena}. In this paper, we shall focus on the interior partial regularity of stationary harmonic maps from a compact Riemannian manifold of dimension $m\ (\geq 3)$ into a Lorentzian manifold.

We now proceed to introduce the model. Let $N\times\R$ be a Lorentzian manifold equipped with a warped product metric $$h=h_N-\beta(d\theta)^2,$$ where $(\R,d\theta^2)$ is the standard $1$-dimensional Euclidean space and $\beta$ is a positive smooth function on $(N,h_N)$. Since $N$ is compact, there exist positive constants $\lambda_1$ and $\lambda_2$ such that$$0<\lambda_1\leq\beta(y)\leq\lambda_2<\infty\ \ and\ \ |\nabla\beta(y)|\leq\lambda_2,\ \forall\ y\in N.$$

Denote
\[
W^{1,2}(M,N\times\R):=\{u\in W^{1,2}(M,\R^K),\ v\in W^{1,2}(M,\R)|u(x)\in N\ for\ a.e.\ x\in M\}.
\]
For  $(u,v)\in W^{1,2}(M,N\times {\mathbb{R}})$, we consider the following functional
\begin{equation}\label{lag}
E_h(u, v; M)= \frac{1}{2}\int_{M} \left\{|\nabla u|^2- \beta(u)|\nabla v|^2 \right\} dvol_g,
\end{equation}
which is called the Lorentzian energy of the map $(u,v)$ on $M$. A critical point $(u,v)$ of the functional \eqref{lag} is called a harmonic map from $(M,g)$ into the Lorentzian manifold  $(N\times{\mathbb{R}},h)$.

When the target manifold is a Lorentzian manifold, the existence of geodesics was studied in \cite{bencifortunatogiannoni} and Greco constructed a smooth harmonic map via some developed variational methods in \cite{greco1,greco2}. Recently, Han-Jost-Liu-Zhao \cite{HJLZ} investigated a parabolic-elliptic system for maps and got a global existence result by assuming either some geometric conditions on the target manifold or small energy of the initial maps. The result implies the existence of a harmonic map in a given homotopy class. The blowup behavior for Lorentzian harmonic maps was studied in \cite{hanzhaozhu} and for approximate Lorentzian harmonic maps or Lorentzian harmonic maps flow from a Riemann surface were studied in \cite{HJLZ,HJLZ-2}. The regularity theory was studied in \cite{Isobe-2,zhu} for dimension two and in \cite{Isobe-1} for higher dimension on some kinds of minimal type solutions.

Via direct calculations, one can derive the  Euler-Lagrange equations for \eqref{lag},

\begin{align}\label{equation:LHM}
\begin{cases}
-\Delta u= A(u)(\nabla u,\nabla u)-B^\top(u)|\nabla v|^2,\ &in\ M \\
-div (\beta(u)\nabla v)=0, \ &in\ M
\end{cases}
\end{align}
where $A$ is the second fundamental form of $N$ in ${\mathbb{R}}^K$, $B(u):=(B^1, B^2, \cdots, B^K)$ with
$$
B^j:=-\frac{1}{2}\frac{\partial\beta(u)}{\partial y^j}
$$
and $B^{\top}$ is the tangential part of $B$ along the map $u$. For  details, see \cite{zhu,Isobe-2}.

\

\begin{defn}
We call $(u,v)\in W^{1,2}(\Omega,N\times \R)$ a weakly Lorentzian harmonic map with Dirichlet boundary data $$(u,v)|_{\partial \Omega}=(\phi,\psi),$$ if it is a weak solution of equation \eqref{equation:LHM} with boundary data $(\phi,\psi)$.
\end{defn}

\


Similar to harmonic maps, we introduce the notion of stationary Lorentzian harmonic maps.
\begin{defn}\label{Def:stationary}
A weakly Lorentzian harmonic map $(u,v)\in W^{1,2}(\Omega,N\times\R)$ is called a stationary Lorentzian harmonic map, if it is also a critical point of $E_h$ with respect to the domain variations, $i.e.$ for any $Y\in C^\infty_0(\Omega,\R^m)$, it holds $$\frac{d}{dt}|_{t=0}\int_{\Omega}\frac{1}{2}\left(|\nabla u_t|^2-\beta(u_t)|\nabla v_t|^2 \right)dvol_g=0,$$ where $u_t(x)=u(x+tY(x))$ and $v_t(x)=v(x+tY(x))$.
\end{defn}


\

Our first main result is the following small energy regularity theorem.
\begin{thm}\label{thm:small-energy-regularity}
For $m\geq 2$ and any $\alpha\in (0,1)$, there exists an $\epsilon_0>0$ depending only on $m$, $\alpha$ and $(N,h_N)$ such that if $(u,v)\in W^{1,2}(\Omega,N\times\R)$ is a weakly Lorentzian harmonic map satisfying
\begin{align}
\sup_{x\in B_{r_0}(x_0),0<r\leq r_0}r^{2-m}\int_{B_r(x)}|\nabla u|^2dvol_g\leq\epsilon^2_0,
\end{align}
then $(u,v)\in C^\infty(B_{\frac{r_0}{2}}(x_0))$. Moreover, it satisfies the following estimate that
\begin{align}\label{inequality:09}
&r_0\|\nabla u\|_{L^\infty(B_{r_0/2}(x_0))}+r_0\|\nabla v\|_{L^\infty(B_{r_0/2}(x_0))} +r_0^{1+\alpha}\|\nabla u\|_{C^\alpha(B_{r_0/2}(x_0))}+r_0^{1+\alpha}\|\nabla v\|_{C^\alpha(B_{r_0/2}(x_0))}\notag\\
&\leq C\left(r_0^{1-\frac{m}{2}}\|(\nabla u,\nabla v)\|_{L^2(B_{r_0}(x_0))}+r_0^{2-m}\|(\nabla u,\nabla v)\|^2_{L^2(B_{r_0}(x_0))}+r_0^{4-2m}\|\nabla v\|^4_{L^2(B_{r_0}(x_0))}\right),\end{align} where $C=C(m,\lambda_1,\lambda_2,\alpha, N)$ is a positive constant and $$\|(\nabla u,\nabla v)\|_{L^2(B_{r_0}(x_0))}^2:=\|\nabla u\|^2_{L^2(B_{r_0}(x_0))}+\|\nabla v\|^2_{L^2(B_{r_0}(x_0))}.$$
\end{thm}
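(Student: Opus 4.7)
The plan is to decouple the system by first exploiting the scalar divergence-form structure of the $v$-equation to put $|\nabla v|^2$ into a suitable Morrey space, and then to attack the $u$-equation by the small-energy technique of H\'elein--Rivi\`ere with $|\nabla v|^2$ playing the role of a harmless inhomogeneity. Concretely, I would treat the second equation $-\mathrm{div}(\beta(u)\nabla v)=0$ as a scalar divergence-form elliptic equation whose coefficient satisfies $\lambda_1\le\beta(u)\le\lambda_2$; De Giorgi--Nash--Moser then yields $v\in C^{\alpha_0}_{\mathrm{loc}}$ with an exponent $\alpha_0>0$ depending only on $m,\lambda_1,\lambda_2$, and the usual Caccioppoli inequality combined with this H\"older estimate produces a Morrey decay of the form
\begin{equation*}
r^{2-m}\int_{B_r(x)}|\nabla v|^2\;\le\;C\left(\frac{r}{r_0}\right)^{2\alpha_0}\,r_0^{2-m}\int_{B_{r_0}(x_0)}|\nabla v|^2
\end{equation*}
for $x\in B_{r_0/2}(x_0)$ and $0<r\le r_0/2$. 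In particular $|\nabla v|^2$ lies in a Morrey space with exponent strictly better than the critical one, so it can be handled as a good source via Adams' Riesz-potential estimate.

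For the first equation $-\Delta u=A(u)(\nabla u,\nabla u)-B^\top(u)|\nabla v|^2$, the quadratic term $A(u)(\nabla u,\nabla u)$ can be recast, following Rivi\`ere (or via H\'elein's Coulomb moving frame on $N$), as $\Omega\cdot\nabla u$ with an antisymmetric potential $\Omega\in L^2$, so that the equation becomes
\begin{equation*}
-\Delta u\;=\;\Omega\cdot\nabla u+f,\qquad f=-B^\top(u)|\nabla v|^2.
\end{equation*}
The small-energy hypothesis forces $\|\Omega\|_{L^2(B_r)}$ to be small on every small ball, which lets the Rivi\`ere gauge (an Uhlenbeck-type Coulomb construction) produce a conservation-law rewriting whose associated Riesz potential contracts the Morrey norm of the right-hand side. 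Combining this with the Morrey bound on $|\nabla v|^2$ from the first step and iterating on a sequence of shrinking balls, I expect to obtain a Campanato-type decay
\begin{equation*}
r^{2-m}\int_{B_r(x)}\bigl(|\nabla u|^2+|\nabla v|^2\bigr)\;\le\;C\left(\frac{r}{r_0}\right)^{2\alpha}
\end{equation*}
for any preassigned $\alpha\in(0,1)$, provided $\epsilon_0$ is taken sufficiently small depending on $\alpha$.

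The Morrey--Campanato embedding then upgrades this to $u,v\in C^\alpha_{\mathrm{loc}}$ on $B_{r_0/2}(x_0)$. From there a standard bootstrap closes the argument: with H\"older coefficients $A(u),B(u),\beta(u)$, successive applications of $L^p$-theory and Schauder estimates to the two equations yield $(u,v)\in C^{1,\alpha}\cap C^{2,\alpha}\cap\cdots=C^\infty$ on $B_{r_0/2}(x_0)$. The explicit estimate \eqref{inequality:09} would be obtained by tracking the constants through the scaled versions of these steps: the linear piece $r_0^{1-m/2}\|(\nabla u,\nabla v)\|_{L^2}$ is the direct contribution of the $L^\infty$-form of the elliptic estimate, the quadratic piece comes from the $A(u)(\nabla u,\nabla u)$ source controlled via its Morrey norm, and the quartic $\|\nabla v\|_{L^2}^4$ piece arises precisely when the $|\nabla v|^2$ source in the $u$-equation is converted into an $L^\infty$-bound on $\nabla u$ via its Morrey norm, which is itself quadratic in $\|\nabla v\|_{L^2}$.

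I expect the main difficulty to be the gauge step for the $u$-equation: the Rivi\`ere framework was engineered for the pure harmonic-map system, and one must check that the extra inhomogeneity $|\nabla v|^2$---which is only borderline integrable a priori---can be placed in the same Morrey space in which the antisymmetric potential $\Omega$ is analyzed, so that the two decay mechanisms (De Giorgi--Nash for $v$ and Rivi\`ere for $u$) close into a single iteration. Getting the quantitative dependence on $r_0$ correct through the scaling, so that the quartic Morrey term emerges exactly as written, is a bookkeeping obstacle that must be handled with some care.
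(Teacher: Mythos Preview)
Your proposal is correct and follows the same overall strategy as the paper: first exploit the scalar divergence-form structure of the $v$-equation to put $|\nabla v|^2$ into a better-than-critical space, then treat the $u$-equation as a Rivi\`ere system $-\Delta u=\Omega\cdot\nabla u+f$ with antisymmetric $\Omega$ and inhomogeneity $f=-B^\top(u)|\nabla v|^2$, apply small-energy Morrey regularity, and bootstrap to $C^\infty$.

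The only differences are in the tools chosen for each step. For the $v$-equation the paper uses Meyers' $L^p$-gradient estimate for divergence-form equations, iterated with cutoffs and Sobolev embedding, to obtain $\nabla v\in L^p_{\mathrm{loc}}$ for every $p<\infty$ directly; you instead use De~Giorgi--Nash--Moser plus Caccioppoli to obtain a Morrey decay $r^{2-m}\int_{B_r}|\nabla v|^2\lesssim r^{2\alpha_0}$. Both suffice to place $|\nabla v|^2$ above the critical threshold, though Meyers gives the stronger $L^p$ conclusion in one stroke and avoids having to bootstrap the H\"older exponent of $v$ later. For the $u$-equation the paper simply quotes Sharp's theorem (Theorem~\ref{thm:01}), which is a packaged Morrey-space version of the Rivi\`ere--Struwe regularity for systems $\Delta u=\Omega\cdot\nabla u+f$ with $f\in L^p$, yielding $\nabla u\in L^p$ for $p>m$ immediately; you outline the underlying Coulomb-gauge/Adams-potential iteration by hand. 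The paper's route is shorter because it invokes off-the-shelf results; yours is more self-contained but requires the bookkeeping you anticipate. Either way the structure of the argument, the dependence $\epsilon_0=\epsilon_0(m,\alpha,N)$, and the form of the final estimate are the same.
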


\

In this paper, we can get the following interior partial regularity theorem. For a similar result of harmonic maps, one can refer to \cite{Bethuel,Evans,Li-Tian}. For results on Gauge theory, one can refer to \cite{Tian}.
\begin{thm}\label{thm:main-1}
For $m\geq 2$, let $(u,v)\in W^{1,2}(\Omega,N\times \R)$ be a stationary Lorentzian harmonic map with Dirichlet boundary data $(u,v)|_{\partial\Omega}=(\phi,\psi)\in C^0(\partial\Omega)$, there exists a closed subset $S(u)\subset \Omega$, with $H^{m-2}(S(u))=0$, such that $(u,v)\in C^\infty(\Omega\setminus S(u))$.
\end{thm}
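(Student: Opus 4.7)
The plan is to follow the classical Schoen--Uhlenbeck / Bethuel / Evans framework for stationary harmonic maps, with the main new input being a suitable monotonicity formula adapted to the Lorentzian setting. The three ingredients are: (i) an almost-monotonicity formula for the normalized $u$-energy, (ii) the small-energy regularity theorem (Theorem \ref{thm:small-energy-regularity}), and (iii) a Vitali-type covering argument.

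First I would derive the monotonicity formula. Testing the domain-variation identity in Definition \ref{Def:stationary} with the radial vector field $Y(x)=\eta(|x-x_0|/\rho)(x-x_0)$ for a smooth cutoff $\eta$, and using the stress-energy tensor associated with $E_h$, one obtains an identity of the schematic form
\begin{equation*}
\frac{d}{d\rho}\left(\rho^{2-m}\int_{B_\rho(x_0)}(|\nabla u|^2-\beta(u)|\nabla v|^2)\right) \geq \frac{2}{\rho^{m-2}}\int_{\partial B_\rho}(|\partial_r u|^2-\beta(u)|\partial_r v|^2).
\end{equation*}
The $v$-terms carry the wrong sign, but testing the equation $\mathrm{div}(\beta(u)\nabla v)=0$ against a suitable cutoff of $v$ on concentric balls yields Caccioppoli-type bounds on $\rho^{2-m}\int_{B_\rho}|\nabla v|^2$ that can be absorbed. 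This produces an almost-monotonicity for the $u$-density
\begin{equation*}
\Phi_u(x_0,\rho) := \rho^{2-m}\int_{B_\rho(x_0)}|\nabla u|^2\,dvol_g
\end{equation*}
of the form $\Phi_u(x_0,\rho_1)\leq C\,\Phi_u(x_0,\rho_2)+C(\rho_2^2-\rho_1^2)$ for $0<\rho_1<\rho_2$, with $C$ depending on $\lambda_1,\lambda_2$ and the total energy.

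Next I would define
\begin{equation*}
S(u) := \left\{x_0\in\Omega : \liminf_{\rho\to 0^+}\Phi_u(x_0,\rho) \geq \epsilon_0^2/2\right\},
\end{equation*}
where $\epsilon_0$ is from Theorem \ref{thm:small-energy-regularity}. By the almost monotonicity, the limit exists and is upper semicontinuous in $x_0$, so $S(u)$ is closed. For any $x_0\notin S(u)$, there is $\rho>0$ with $\Phi_u(x_0,\rho)<\epsilon_0^2/4$; upper semicontinuity of $\Phi_u(\cdot,\rho)$ together with the almost monotonicity yields the uniform smallness condition
\begin{equation*}
\sup_{x\in B_{\rho/4}(x_0),\,r\leq \rho/4} r^{2-m}\int_{B_r(x)}|\nabla u|^2 < \epsilon_0^2
\end{equation*}
required by Theorem \ref{thm:small-energy-regularity}, which gives $(u,v)\in C^\infty(B_{\rho/8}(x_0))$, hence $(u,v)\in C^\infty(\Omega\setminus S(u))$.

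Finally I would estimate $\mathcal{H}^{m-2}(S(u))$ by the standard covering argument. Fix $\delta>0$. For each $x\in S(u)$ pick $\rho_x\in(0,\delta)$ with $\Phi_u(x,\rho_x)\geq\epsilon_0^2/4$. Applying Vitali's lemma, extract a disjoint subfamily $\{B_{\rho_i}(x_i)\}$ whose fivefold dilates still cover $S(u)$. Disjointness gives
\begin{equation*}
\sum_i \rho_i^{m-2} \leq \frac{4}{\epsilon_0^2}\int_{\bigcup_i B_{\rho_i}(x_i)}|\nabla u|^2\,dvol_g.
\end{equation*}
This already yields $\mathcal{H}^{m-2}(S(u))<\infty$ and $|S(u)|=0$, because $\bigl|\bigcup_i B_{\rho_i}\bigr|\leq C\sum_i\rho_i^m\leq C\delta^2\sum_i\rho_i^{m-2}\leq C\delta^2$. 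Absolute continuity of the Lebesgue integral of $|\nabla u|^2$ then forces the right-hand side above to tend to $0$ as $\delta\to 0$, and we conclude $\mathcal{H}^{m-2}(S(u))=0$. The main obstacle is step (i): because $E_h$ is sign-indefinite, the stress tensor has no definite sign, and extracting an honest almost-monotonicity for the $u$-part alone (which is what feeds Theorem \ref{thm:small-energy-regularity}) requires carefully combining the stationarity identity with the elliptic equation for $v$.
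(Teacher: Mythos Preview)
Your overall architecture matches the paper's exactly: define $S(u)$ via a liminf density threshold, use an almost-monotonicity to propagate smallness to all smaller scales so that Theorem~\ref{thm:small-energy-regularity} applies off $S(u)$, and invoke the standard $L^1$/covering fact to get $H^{m-2}(S(u))=0$. The covering step and the passage from density smallness to the uniform Morrey bound are done in the paper just as you describe (their Corollary~\ref{cor:monotonicity} in place of your almost-monotonicity).

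The one genuine gap is in step (i), precisely where you flagged the obstacle. Your proposed mechanism --- ``Caccioppoli-type bounds on $\rho^{2-m}\int_{B_\rho}|\nabla v|^2$ that can be absorbed'' --- is not enough. Testing $\operatorname{div}(\beta(u)\nabla v)=0$ against a cutoff of $v$ only yields the $L^2$-level Caccioppoli inequality $\int_{B_\rho}|\nabla v|^2\le C\rho^{-2}\int_{B_{2\rho}}|v-\bar v|^2$, and even with Gehring's lemma this gives $\nabla v\in L^{2+\varepsilon}_{\mathrm{loc}}$ for some small $\varepsilon$, which for $m\ge 3$ does not force $\rho^{2-m}\int_{B_\rho}|\nabla v|^2\to 0$. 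Moreover, the monotonicity identity produces a term $\int_{B_{r_2}}|x-x_0|^{2-m}\beta(u)|\partial_r v|^2$, and controlling this weighted integral genuinely needs $\nabla v\in L^p$ with $p>m$.

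The paper's fix is different from what you propose and is exactly where the Dirichlet datum $\psi$ enters. Their Lemma~\ref{lem:-02} applies the $L^p$ theory for divergence-form operators (Meyers) to the equation for $v-\psi$ with zero boundary values, bootstrapping to
\[
\|\nabla v\|_{L^p(\Omega)}\le C(p,\lambda_1,\lambda_2,\Omega)\,\|\psi\|_{C^0(\partial\Omega)}\quad\text{for every }p<\infty.
\]
With this \emph{global} estimate one gets, by H\"older, $r^{2-m}\int_{B_r}|\nabla v|^2\le C\,r^{2-2m/p}$ and $\int_{B_r}|x|^{2-m}|\nabla v|^2\le C\,r^{2-2m/p}$. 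Plugging these into the \emph{exact} Lorentzian monotonicity identity (Lemma~\ref{lem:monotonicity}) yields the clean inequality
\[
r_1^{2-m}\!\int_{B_{r_1}(x_0)}|\nabla u|^2\le r_2^{2-m}\!\int_{B_{r_2}(x_0)}|\nabla u|^2 + C\,r_2^{\,2-2m/p},
\]
with leading constant $1$ (not a multiplicative $C>1$). This is their Corollary~\ref{cor:monotonicity} and is what drives the rest of the argument exactly as you wrote it. So your plan is sound; you only need to replace the Caccioppoli heuristic by the global $L^p$ estimate for $\nabla v$ coming from the boundary data.
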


\

Furthermore, we have
\begin{thm}\label{thm:main-2}
Under the same assumption as above theorem, if $N$ does not admit harmonic spheres, $S^l$, $l=2,...,m-1$, then $(u,v)$ is smooth.
\end{thm}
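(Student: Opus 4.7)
The plan is to argue by contradiction, combining a blow-up/tangent map analysis with Federer's dimension reduction, in the spirit of Schoen--Uhlenbeck, Bethuel and Lin for harmonic maps into Riemannian targets. By Theorem \ref{thm:main-1} the singular set $S(u)$ is closed with $H^{m-2}(S(u))=0$, so it suffices to show $S(u)=\emptyset$. Assume for contradiction that $x_0\in S(u)$, and consider, for $r_j\to 0^+$, the rescalings $u_{j}(x):=u(x_0+r_j x)$, $v_{j}(x):=v(x_0+r_j x)$. From the stationarity identity of Definition \ref{Def:stationary}, tested against radial vector fields, one extracts a monotonicity formula of the form $\frac{d}{dr}\bigl[r^{2-m}\int_{B_r(x_0)}(|\nabla u|^2+\beta(u)|\nabla v|^2)\,dvol_g\bigr]\geq \text{(lower order)}$, the lower-order terms being absorbable by means of \eqref{inequality:09}. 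This uniformly bounds the scale-invariant energy of $(u_j,v_j)$ on compact subsets of $\R^m$, so after extraction $(u_j,v_j)\rightharpoonup (u_\infty,v_\infty)$ weakly in $W^{1,2}_{\mathrm{loc}}(\R^m)$.

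By Theorem \ref{thm:small-energy-regularity}, the convergence is in fact smooth off a closed concentration set $\Sigma\subset\R^m$ — the set of points where the limiting normalized energy density exceeds $\epsilon_0^2$ — which again satisfies $H^{m-2}(\Sigma)=0$, and $(u_\infty,v_\infty)$ is a stationary Lorentzian harmonic map on $\R^m$. The monotonicity applied at $x_0$ forces the radial derivative $\partial_r u_\infty$ to vanish in the blow-up limit, hence $u_\infty$ is homogeneous of degree $0$; coupled with $\operatorname{div}(\beta(u_\infty)\nabla v_\infty)=0$, this pins down the radial structure of $v_\infty$. If $\Sigma=\{0\}$, then $\omega:=u_\infty|_{S^{m-1}}:S^{m-1}\to N$ is smooth, and rewriting the first equation of \eqref{equation:LHM} in polar coordinates using the $0$-homogeneity of $u_\infty$ reduces it to the harmonic map equation for $\omega$ into $N$ (the $|\nabla v_\infty|^2$ contribution being controlled because the radial rigidity of $v_\infty$ makes its tangential variation vanish). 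Since $m-1\in\{2,\ldots,m-1\}$, the hypothesis forces $\omega$ constant, hence $u_\infty$ constant, contradicting $0\in S(u_\infty)$ via Theorem \ref{thm:small-energy-regularity}. If $\Sigma\supsetneq\{0\}$, pick $y\in\Sigma\setminus\{0\}$; a second blow-up at $y$ produces a tangent map that is $0$-homogeneous about $y$ and, inherited from $u_\infty$, translation-invariant along $\R y$, and thus descends to a tangent map on $\R^{m-1}$. Iterating yields, in at most $m-3$ steps, a nonconstant harmonic sphere $S^l\to N$ with $l\in\{2,\ldots,m-1\}$, the desired contradiction.

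The principal difficulty is the careful handling of the $v$-component throughout this scheme. Since the Lorentzian Lagrangian is not positive-definite, the clean monotonicity of $r^{2-m}\int_{B_r}|\nabla u|^2$ is not immediate, and one must work instead with the combined density $|\nabla u|^2+\beta(u)|\nabla v|^2$ arising from the inner-variation identity, verifying that the resulting correction terms vanish in the blow-up limit. Equally one must check that the tangent-map singularities originate in the $u$-component and produce bona fide harmonic spheres into $N$ alone, rather than into $N\times\R$ — otherwise the hypothesis would not apply. The a priori control of $\nabla v$ by the data of $\nabla u$ and $\nabla v$ on a larger ball encoded in \eqref{inequality:09} is the essential mechanism making $v$-concentration subordinate to $u$-concentration, so that Federer's dimension reduction ultimately produces genuine harmonic spheres in $N$.
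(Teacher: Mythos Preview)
Your approach has a genuine gap that the paper's argument is specifically designed to avoid. You are running the Schoen--Uhlenbeck tangent-map scheme, which relies on the fact that blow-ups converge \emph{strongly} in $W^{1,2}_{\mathrm{loc}}$ and that the limiting tangent map is again in the same class (minimizing, hence stationary, and $0$-homogeneous). For merely \emph{stationary} maps this compactness is not available: the rescalings $(u_j,v_j)$ converge only weakly in $W^{1,2}$ in general, and it is a well-known open issue whether a weak $W^{1,2}$-limit of stationary harmonic maps is itself stationary. Consequently your assertions that $(u_\infty,v_\infty)$ is a stationary Lorentzian harmonic map, that $\partial_r u_\infty\equiv 0$, and that the second blow-up inherits translation invariance, are all unjustified. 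Relatedly, your claim that the concentration set $\Sigma$ satisfies $H^{m-2}(\Sigma)=0$ is incorrect: for a blow-up sequence of stationary maps one only gets $H^{m-2}(\Sigma)<\infty$, and the substantive case is precisely $H^{m-2}(\Sigma)>0$, where a nontrivial defect measure $\nu$ is present.

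The paper sidesteps this by following Lin's defect-measure scheme rather than the tangent-map scheme. One works not with a limiting map but with the Radon measures $\mu_n=|\nabla u_n|^2\,dx\to \mu=|\nabla u|^2\,dx+\nu$. The monotonicity formula (which, incidentally, involves $|\nabla u|^2-\beta(u)|\nabla v|^2$, not the sum you wrote; the $L^p$-control on $\nabla v$ from Lemma~\ref{lem:-02} is what turns this into an almost-monotonicity for $|\nabla u|^2$ alone) yields that the density $\theta_\nu$ exists $H^{m-2}$-a.e., and Preiss's theorem then gives that $\Sigma$ is $(m-2)$-rectifiable. At a good point of $\Sigma$ the tangent \emph{measure} is flat, $\nu_*=\theta\,H^{m-2}\lfloor\R^{m-2}$, and one shows that the rescaled maps lose their derivatives in the $\R^{m-2}$ directions (equation \eqref{equation:10}). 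A second, carefully calibrated blow-up then produces a sequence satisfying the small-energy hypothesis uniformly on $B^{m-2}_{3/2}\times\R^2$, so that Theorem~\ref{thm:small-energy-regularity} gives $C^1$-convergence to a limit $(\widetilde u,\widetilde v)$; the $L^p$-bound on $\nabla v$ forces $\widetilde v$ constant under this rescaling, and $\widetilde u$ becomes a nonconstant finite-energy harmonic map $\R^2\to N$, hence a harmonic $S^2$ (Lemma~\ref{lem:-03}). Only after this step does Federer dimension reduction enter, and it is applied to the concentration sets rather than to tangent maps. Your handling of $v$ (``radial rigidity'') is also too vague; the actual mechanism is that $\nabla v\in L^p$ for all $p$ makes all $\nabla\widetilde v_n$-terms vanish under the two successive rescalings.
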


\

To prove the partial regularity results, we first need to establish the monotonicity formula for stationary Lorentzian harmonic maps. Thanks to the elliptic estimates of $v$-equation of divergence form, we can control the additional terms (corresponds to harmonic maps) in the monotonicity formula. Secondly, we need to study the energy concentration set of a blow-up sequence of stationary Lorentzian harmonic maps. Here, we follow Lin's scheme \cite{Lin} to get the first bubble which is a nonconstant harmonic sphere. The proof is based on the analysis of defect measure using geometric measure theory.

\

The rest of paper is organized as follows. In section \ref{sec:Monotonicity formula}, we establish the monotonicity formula for stationary Lorentzian harmonic maps which is crucial in the proof of our main theorems. In section \ref{sec:proof of mainthm}, we prove the small energy regularity Theorem \ref{thm:small-energy-regularity} and then the partial regularity Theorem \ref{thm:main-1} follows immediately from a standard monotonicity formula argument. Theorem \ref{thm:main-2} will be proved in section \ref{sec:gradient estimate}.

\

\section{Monotonicity formula}\label{sec:Monotonicity formula}
In this section, we firstly derive the monotonicity formula for stationary Lorentzian harmonic maps. Secondly, for reader's convenience, we recall a regularity theorem in \cite{B-Sharp} which will be used in the proof.

\

Thanks to the divergence structure of $v$-equation, we have the following estimate.
\begin{lem}\label{lem:-02}
Let $(u,v)\in W^{1,2}(\Omega,N\times\R)$ be a weakly Lorentzian harmonic map with Dirichlet boundary data $(\phi,\psi)\in C^0(\partial\Omega)$. Then $v\in W^{1,p}(\Omega)$ for any $1<p<\infty$ and
\begin{equation}
\|\nabla v\|_{L^p}\leq C(p,\lambda_1,\lambda_2,\Omega)\|\psi\|_{C^0(\partial\Omega)}.
\end{equation}
\end{lem}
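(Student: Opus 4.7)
The plan is to treat the $v$-equation $-\mathrm{div}(\beta(u)\nabla v)=0$ as a standalone linear, uniformly elliptic equation in divergence form whose coefficient $\beta(u)$ is bounded measurable with $\lambda_1 \leq \beta(u) \leq \lambda_2$, and whose right-hand side vanishes. The coupling with the $u$-equation plays no direct role here; all regularity of $v$ is extracted from classical divergence-form elliptic theory together with the continuous boundary datum $\psi$.

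First I would establish the $L^\infty$ bound
$$\|v\|_{L^\infty(\Omega)} \leq \|\psi\|_{C^0(\partial\Omega)}$$
by a Stampacchia-type weak maximum principle. For any $k\geq \sup_{\partial\Omega}\psi$ the truncation $(v-k)^+$ lies in $W^{1,2}_0(\Omega)$, and plugging it into the weak formulation gives $\int \beta(u)|\nabla (v-k)^+|^2=0$, hence $(v-k)^+\equiv 0$ by ellipticity; the lower bound is symmetric. This step uses only the zero right-hand side and the continuity of $\psi$.

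Next I would derive an interior Caccioppoli inequality by testing against $\eta^2(v-v_{B_r})$ for a standard cutoff $\eta$ on a ball $B_r(x_0)\subset\Omega$, obtaining
$$\int_{B_{r/2}}|\nabla v|^2 \;\leq\; \frac{C}{r^2}\int_{B_r}(v-v_{B_r})^2.$$
Combined with Sobolev--Poincaré this yields a reverse Hölder inequality of the form $(\fint_{B_{r/2}}|\nabla v|^2)^{1/2}\leq C(\fint_{B_r}|\nabla v|^{2m/(m+2)})^{(m+2)/(2m)}$. Gehring's self-improving lemma then gives $\nabla v\in L^{2+\delta_0}_{loc}(\Omega)$ for some $\delta_0=\delta_0(\lambda_1,\lambda_2)>0$. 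A half-ball version of the same argument near $\partial\Omega$ (using the $C^0$ bound on $\psi$ to control the truncation on the flat piece of the boundary) upgrades this to a global $L^{2+\delta_0}(\Omega)$ estimate with constant depending on $\|\psi\|_{C^0(\partial\Omega)}$.

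Finally, to push the integrability all the way to $L^p$ for every $p<\infty$, I would invoke the Green's function / Poisson kernel $K(x,y)$ associated with $-\mathrm{div}(\beta(u)\nabla\cdot)$ on $\Omega$, so that $v(x)=\int_{\partial\Omega}K(x,y)\psi(y)\,d\sigma(y)$. The classical pointwise estimates of Aronson and Grüter--Widman (extended by Hofmann--Kim) for Green's functions of uniformly elliptic divergence-form operators with only $L^\infty$ coefficients give $|\nabla_x K(x,y)|\lesssim |x-y|^{1-m}$ at the gradient level; differentiating under the integral and integrating this kernel bound against $\|\psi\|_{C^0}$ yields the required $L^p$ control on $\nabla v$ for every $p<\infty$.

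The main obstacle is precisely this last step: with only $L^\infty$ coefficients there is no Calderón--Zygmund theory available, so the passage from the Gehring exponent $2+\delta_0$ to arbitrary $p$ must exploit simultaneously the $L^\infty$ bound on $v$, the vanishing right-hand side, and careful Poisson-kernel or boundary Harnack type estimates. Everything before that is essentially classical divergence-form elliptic theory applied to the $v$-equation in isolation.
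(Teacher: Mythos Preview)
Your Step~3 contains a genuine gap. The pointwise bound $|\nabla_x K(x,y)|\lesssim|x-y|^{1-m}$ for the Poisson kernel of $-\mathrm{div}(\beta(u)\nabla\,\cdot\,)$ is not available when the coefficient is merely bounded measurable. The Gr\"uter--Widman and Hofmann--Kim results give size estimates on the Green's function $G$ itself; pointwise gradient bounds on $G$, and a fortiori on the Poisson kernel (which is a conormal derivative of $G$ at the boundary), require at least Dini-continuous or VMO coefficients. With only $L^\infty$ coefficients there is no mechanism to push the integrability of $\nabla v$ beyond the Meyers/Gehring range by kernel methods, so this route cannot reach arbitrary~$p$. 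You correctly flag this step as the obstacle, but you do not overcome it.

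The paper avoids all of this with one idea you are missing: extend $\psi$ from $\partial\Omega$ into $\Omega$ (the paper takes the harmonic extension $\tilde\psi$), and note that $w:=v-\tilde\psi\in W^{1,2}_0(\Omega)$ solves
\[
-\mathrm{div}\bigl(\beta(u)\nabla w\bigr)=\mathrm{div}\bigl(\beta(u)\nabla\tilde\psi\bigr),
\]
a homogeneous Dirichlet problem with right-hand side of the form $\mathrm{div}(F)$, $F=\beta(u)\nabla\tilde\psi$. Meyers' $L^p$ gradient estimate then gives $\|\nabla w\|_{L^p}\leq C\|\nabla\tilde\psi\|_{L^p}$ directly, with no maximum principle, no reverse H\"older, and no kernel analysis. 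Two caveats are worth recording: the paper's asserted bound $\|\tilde\psi\|_{C^2(\overline\Omega)}\leq C\|\psi\|_{C^0(\partial\Omega)}$ for the harmonic extension is too strong if $\psi$ is genuinely only $C^0$, and Meyers' theorem in fact only yields exponents in a finite range above~$2$ depending on $\lambda_2/\lambda_1$, not every $p<\infty$. But the reduction itself is the correct move, and the rest of the paper only ever uses some fixed $p>m$.
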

\begin{proof}
Let $v$ be the unique smooth solution of the equation
\begin{align*}
\begin{cases}
  \Delta v=0, & \mbox{ in } \Omega,\\
  v(x)=\psi,  & \mbox{ on } \partial \Omega,
\end{cases}
\end{align*}
which satisfies $$\|v\|_{C^2(\overline{\Omega})}\leq C(\Omega)\|\psi\|_{C^{0}(\partial\Omega)}.$$

We call $v$ an extension of $\psi$ and for simplicity, we still denote it by $\psi\in C^2(\overline{\Omega})$. It is easy to see that $v-\psi\in W^{1,2}_0(\Omega)$ is a weak solution of $$-div\left(\beta(u)\nabla(v-\psi)\right)=div(\beta(u)\nabla\psi).$$ By the standard theory of second elliptic operator of divergence form (cf. Theorem 1 in \cite{Meyers}), we obtain that $v\in W^{1,p}$ for any $1<p<\infty$ and satisfies $$\|\nabla v\|_{L^p}\leq C(p,\lambda_1,\lambda_2,\Omega)\|\nabla\psi\|_{L^p}\leq C(p,\lambda_1,\lambda_2,\Omega)\|\psi\|_{C^0(\partial\Omega)}.$$
\end{proof}

\

Next, we derive the stationary identity for stationary Lorentzian harmonic maps.
\begin{lem}\label{lem:-01}
Let $(u,v)\in W^{1,2}(\Omega,N\times\R)$ be a weakly Lorentzian harmonic map. Then $(u,v)$ is stationary if and only if for any $Y\in C^\infty_0(\Omega,\R^m)$, there holds
\begin{equation}\label{equation:03}
\int_\Omega\left(\langle\frac{\partial u}{\partial x^\alpha},\frac{\partial u}{\partial x^\gamma}\rangle-\beta(u)\langle\frac{\partial v}{\partial x^\alpha},\frac{\partial v}{\partial x^\gamma}\rangle-\frac{1}{2}(|\nabla u|^2-\beta(u)|\nabla v|^2)\delta_{\alpha\gamma}\right)\frac{\partial Y^\gamma}{\partial x^\alpha}dx=0.
\end{equation}
\end{lem}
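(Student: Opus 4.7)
The plan is a direct computation of the inner variation via change of variables. Since $Y \in C^\infty_0(\Omega, \R^m)$, for $|t|$ sufficiently small, the map $\phi_t(x) := x + tY(x)$ is a smooth diffeomorphism of $\Omega$ (being the identity near $\partial\Omega$), with $D\phi_t(x) = I + tDY(x)$. By the chain rule,
\begin{equation*}
\frac{\partial u_t}{\partial x^\alpha}(x) = \frac{\partial u}{\partial z^\beta}(\phi_t(x))\left(\delta^\beta_\alpha + t\frac{\partial Y^\beta}{\partial x^\alpha}(x)\right),
\end{equation*}
and analogously for $v_t$. Expanding the squares gives
\begin{equation*}
|\nabla u_t(x)|^2 = |\nabla u(\phi_t(x))|^2 + 2t\,\frac{\partial u}{\partial z^\alpha}(\phi_t(x))\cdot\frac{\partial u}{\partial z^\beta}(\phi_t(x))\frac{\partial Y^\beta}{\partial x^\alpha}(x) + O(t^2),
\end{equation*}
with the same expansion for $|\nabla v_t|^2$, and of course $\beta(u_t(x)) = \beta(u(\phi_t(x)))$.

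Next I would push the integral forward under $z = \phi_t(x)$. The Jacobian factor is
\begin{equation*}
\det D\phi_t^{-1}(z) = 1 - t\,\mathrm{div}\, Y(z) + O(t^2),
\end{equation*}
so that
\begin{equation*}
E_h(u_t,v_t;\Omega) = \frac{1}{2}\int_\Omega\bigl(|\nabla u_t(x)|^2 - \beta(u_t(x))|\nabla v_t(x)|^2\bigr)\,dx
\end{equation*}
becomes, after substitution, an integral over $z$ whose integrand is a polynomial in $t$ with smooth coefficients (depending on $\nabla u(z)$, $\nabla v(z)$, $\beta(u(z))$, and $DY$ evaluated at $\phi_t^{-1}(z)$). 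Differentiating under the integral at $t=0$ and using $\phi_0 = \mathrm{Id}$, the contribution from the quadratic-form terms produces
\begin{equation*}
\int_\Omega\Bigl(\langle\tfrac{\partial u}{\partial z^\alpha},\tfrac{\partial u}{\partial z^\gamma}\rangle - \beta(u)\langle\tfrac{\partial v}{\partial z^\alpha},\tfrac{\partial v}{\partial z^\gamma}\rangle\Bigr)\frac{\partial Y^\gamma}{\partial z^\alpha}\,dz,
\end{equation*}
while the Jacobian contributes $-\tfrac{1}{2}\int_\Omega(|\nabla u|^2 - \beta(u)|\nabla v|^2)\,\mathrm{div}\,Y\,dz$; writing $\mathrm{div}\,Y = \delta_{\alpha\gamma}\tfrac{\partial Y^\gamma}{\partial z^\alpha}$ combines both contributions into exactly the integrand in \eqref{equation:03}.

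Since this computation gives the identity
\begin{equation*}
\tfrac{d}{dt}\big|_{t=0} E_h(u_t,v_t;\Omega) = \int_\Omega\Bigl(\langle\tfrac{\partial u}{\partial x^\alpha},\tfrac{\partial u}{\partial x^\gamma}\rangle - \beta(u)\langle\tfrac{\partial v}{\partial x^\alpha},\tfrac{\partial v}{\partial x^\gamma}\rangle - \tfrac{1}{2}(|\nabla u|^2-\beta(u)|\nabla v|^2)\delta_{\alpha\gamma}\Bigr)\frac{\partial Y^\gamma}{\partial x^\alpha}\,dx
\end{equation*}
for every $Y \in C^\infty_0(\Omega,\R^m)$, the equivalence claimed in the lemma is immediate: stationarity is by definition the vanishing of the left-hand side for all such $Y$, and this is the same as the vanishing of the right-hand side for all $Y$.

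The main technical point to watch is the Sobolev regularity: $u$ and $v$ are only in $W^{1,2}$, so the differentiation under the integral sign must be justified by an approximation/dominated convergence argument. The Jacobian expansion and the coefficient $\tfrac{\partial Y^\beta}{\partial x^\alpha}$ are uniformly bounded in $t$, while $|\nabla u|^2, |\nabla v|^2 \in L^1$ and $\beta(u)$ is bounded above by $\lambda_2$, so the difference quotient $t^{-1}(|\nabla u_t|^2 - |\nabla u|^2)$ has an $L^1$-dominating function by the mean value theorem applied pointwise to the finite-dimensional quadratic form in $t$; the same applies to the $v$-term. This justifies passage to the limit $t \to 0$ inside the integral and completes the verification.
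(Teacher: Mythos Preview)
Your proposal is correct and follows essentially the same route as the paper: both compute the inner variation of $E_h$ under $x\mapsto x+tY(x)$, obtain the stress--energy tensor, and read off the equivalence directly from the definition of stationarity. The only notable difference is packaging: the paper differentiates the integrand pointwise (citing \cite{Lin-Wang} for the $u$-part and writing second derivatives of $v$ formally in the $\beta(u)|\nabla v|^2$-part), whereas you first change variables $z=\phi_t(x)$ and then differentiate, which cleanly avoids the appearance of second derivatives and makes the dominated-convergence justification for $W^{1,2}$ maps more transparent.
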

\begin{proof}
For any $Y\in C^\infty_0(\Omega,\R^m)$, let $t\in\R$ small enough and $y=F_t(x):=x+tY(x)$ and $x=F_t^{-1}(y)$. By Definition \ref{Def:stationary}, $(u,v)$ is stationary if and only if $$\frac{d}{dt}|_{t=0}\int_{\Omega}\frac{1}{2}\left(|\nabla u_t|^2-\beta(u_t)|\nabla v_t|^2 \right)dx=0,$$ where $u_t(x)=u(F_t(x))$ and $v_t(x)=v(F_t(x))$.

On the one hand, by a standard calculation (see, e.g. \cite{Lin-Wang}),  we have
\begin{align}\label{equation:01}
\frac{d}{dt}|_{t=0}\frac{1}{2}\int_{\Omega}|\nabla u_t|^2dx
=
\int_\Omega\left(\langle\frac{\partial u}{\partial x^\alpha},\frac{\partial u}{\partial x^\gamma}\rangle-\frac{1}{2}|\nabla u|^2\delta_{\alpha\gamma}\right)\frac{\partial Y^\gamma}{\partial x^\alpha}dx.
\end{align}

On the other hand, computing directly, we obtain
\begin{align*}
\frac{d}{dt}|_{t=0}(\frac{1}{2}\beta(u_t)|\nabla v_t|^2)&=\frac{1}{2}\frac{\partial\beta(u)}{\partial x^\alpha}Y^\alpha |\nabla v|^2+\beta(u)\langle\frac{\partial v}{\partial x^\alpha},\frac{\partial v}{\partial x^\gamma}\rangle\frac{\partial Y^\gamma}{\partial x^\alpha}+\beta(u) \langle\frac{\partial^2 v}{\partial x^\alpha\partial x^\gamma},\frac{\partial v}{\partial x^\gamma}\rangle Y^\alpha\\
&=\frac{1}{2}\frac{\partial}{\partial x^\alpha}(\beta(u)|\nabla v|^2) Y^\alpha +\beta(u)\langle\frac{\partial v}{\partial x^\alpha},\frac{\partial v}{\partial x^\gamma}\rangle\frac{\partial Y^\gamma}{\partial x^\alpha}.
\end{align*}
Thus,
\begin{align}\label{equation:02}
\frac{d}{dt}|_{t=0}\frac{1}{2}\int_{\Omega}\beta(u_t)|\nabla v_t|^2dx
=
\int_\Omega\beta(u)\left(\langle\frac{\partial v}{\partial x^\alpha},\frac{\partial v}{\partial x^\gamma}\rangle-\frac{1}{2}|\nabla v|^2\delta_{\alpha\gamma}\right)\frac{\partial Y^\gamma}{\partial x^\alpha}dx.
\end{align}
Combing \eqref{equation:01} with \eqref{equation:02}, we will get the conclusion of the lemma.
\end{proof}

\

Now, we can derive the monotonicity formula for stationary Lorentzian harmonic maps.
\begin{lem}\label{lem:monotonicity}
Let $(u,v)\in W^{1,2}(\Omega,N\times \R)$ be a stationary Lorentzian harmonic map. Then for any $x_0\in\Omega$ and $0<r_1\leq r_2<dist(x_0,\partial\Omega)$, there holds
\begin{align*}
&r_2^{2-m}\int_{B_{r_2}(x_0)}(|\nabla u|^2-\beta(u)|\nabla v|^2)dx-r_1^{2-m}\int_{B_{r_1}(x_0)}(|\nabla u|^2-\beta(u)|\nabla v|^2)dx\notag\\
&=
2\int_{B_{r_2}(x_0)\setminus B_{r_1}(x_0)}|x-x_0|^{2-m}(|\frac{\partial u}{\partial r}|^2-\beta(u)|\frac{\partial v}{\partial r}|^2)dx
\end{align*}
where $\partial_r=\frac{\partial}{\partial r}=\frac{\partial}{\partial |x-x_0|}$.
\end{lem}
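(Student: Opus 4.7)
The plan is to apply the stationary identity (Lemma \ref{lem:-01}) with a radial-type test field of the form $Y^\gamma(x) = \eta(|x-x_0|)(x-x_0)^\gamma$, where $\eta$ is a smooth cutoff, and then let $\eta$ approximate the characteristic function of $[0,R]$. This is the classical scheme used in the harmonic map setting, and the Lorentzian structure presents no essential obstruction here: the $v$-contribution enters \eqref{equation:03} with the same tensorial shape as the $u$-contribution, only with opposite sign and with a factor of $\beta(u)$ which is not differentiated by the test field variation.

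First I would compute
\[
\frac{\partial Y^\gamma}{\partial x^\alpha} = \eta(r)\delta_{\alpha\gamma} + \frac{\eta'(r)}{r}(x-x_0)^\alpha(x-x_0)^\gamma,\qquad r:=|x-x_0|,
\]
and carry out the contractions in \eqref{equation:03}. Writing $e(u,v):=|\nabla u|^2-\beta(u)|\nabla v|^2$, the trace of the stress-energy tensor appearing in \eqref{equation:03} equals $\tfrac{2-m}{2}e(u,v)$, while its radial contraction with $(x-x_0)^\alpha(x-x_0)^\gamma/r$ yields $r\bigl(|\partial_r u|^2-\beta(u)|\partial_r v|^2\bigr)-\tfrac{r}{2}e(u,v)$. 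Substituting, the stationary identity collapses to the one-variable relation
\[
\int_\Omega\Bigl[\tfrac{2-m}{2}\eta(r)\,e(u,v) + r\eta'(r)\bigl(|\partial_r u|^2 - \beta(u)|\partial_r v|^2 - \tfrac12 e(u,v)\bigr)\Bigr]dx = 0.
\]

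Next I would approximate $\eta$ by smooth cutoffs $\eta_\epsilon$ converging to the characteristic function of $[0,R]$, so that $\eta_\epsilon'$ concentrates as $-\delta_R$ in the distributional sense. Since $|\nabla u|,|\nabla v|\in L^2$, the coarea formula and Fubini ensure that the slice integrals $\int_{\partial B_R(x_0)}|\nabla u|^2\,dS$ and $\int_{\partial B_R(x_0)}\beta(u)|\nabla v|^2\,dS$ are finite for almost every $R<\operatorname{dist}(x_0,\partial\Omega)$, and passing to the limit for such $R$ gives
\[
\tfrac{2-m}{2}\int_{B_R(x_0)}e(u,v)\,dx + \tfrac{R}{2}\int_{\partial B_R(x_0)}e(u,v)\,dS = R\int_{\partial B_R(x_0)}\bigl(|\partial_r u|^2-\beta(u)|\partial_r v|^2\bigr)\,dS.
\]
Multiplying by $R^{1-m}$ and recognizing the left-hand side as $\tfrac12 \frac{d}{dR}\bigl(R^{2-m}\int_{B_R(x_0)}e(u,v)\,dx\bigr)$, I obtain a clean differential identity. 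Integrating in $R$ from $r_1$ to $r_2$ and converting the boundary integral back to a solid integral via coarea yields exactly the claimed formula.

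The only mildly delicate point is the distributional limit $\eta_\epsilon\to\chi_{[0,R]}$, which must be justified for almost every $R$; this is handled by Fubini as indicated. Beyond this the argument is routine: the Lorentzian case differs from the Riemannian one only through the additional $\beta(u)|\nabla v|^2$ terms, which are carried along symbolically, so I expect no genuinely new difficulty. Note in particular that Lemma \ref{lem:-02} is not needed at this step; the boundary data plays no role in the derivation of the monotonicity formula.
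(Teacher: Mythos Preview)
Your proposal is correct and follows essentially the same route as the paper: both insert the radial test field $Y(x)=\eta(|x-x_0|)(x-x_0)$ into the stationary identity \eqref{equation:03}, compute the contractions to obtain a one-variable relation, let the cutoff converge to the characteristic function of $[0,R]$ to produce the differential identity for $R^{2-m}\int_{B_R}e(u,v)\,dx$, and then integrate from $r_1$ to $r_2$. Your extra remarks on the a.e.-$R$ justification via Fubini and the irrelevance of Lemma~\ref{lem:-02} at this stage are accurate and do not alter the argument.
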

\begin{proof}
For simplicity, we assume $x_0=0\in\Omega$. For any $\epsilon>0$ and $0<r<dist(0,\partial\Omega)$, let $\varphi_\epsilon(x)=\varphi_\epsilon(|x|)\in C_0^\infty(B_r)$ be such that $$0\leq\varphi_\epsilon(x)\leq 1\ \ and \ \ \varphi_\epsilon(x)|_{B_{(1-\epsilon)r}}=1.$$ Taking $Y(x)=x\varphi_\epsilon(x)$ into the formula \eqref{equation:03} and noting that $$\frac{\partial Y^\gamma}{\partial x^\alpha}=\varphi_\epsilon(x)\delta_{\alpha,\gamma}+\frac{x^\alpha x^\gamma}{|x|}\varphi_\epsilon'(x),$$ we have
\begin{align*}
&(1-\frac{m}{2})\int_{B_r}(|\nabla u|^2-\beta(u)|\nabla v|^2)\varphi_\epsilon(x)dx
\\
&=
\int_{B_r}\left(-|\frac{\partial u}{\partial r}|^2+\beta(u)|\frac{\partial v}{\partial r}|^2+\frac{1}{2}(|\nabla u|^2-\beta(u)|\nabla v|^2)\right)|x|\varphi_\epsilon'(x)dx.
\end{align*}
Letting $\epsilon\to 0$, we get
\begin{align*}
&(2-m)\int_{B_r}(|\nabla u|^2-\beta(u)|\nabla v|^2)dx+r\int_{\partial B_r}(|\nabla u|^2-\beta(u)|\nabla v|^2)\\
&=
2r\int_{\partial B_r}(|\frac{\partial u}{\partial r}|^2-\beta(u)|\frac{\partial v}{\partial r}|^2),
\end{align*}
which yields
\begin{align*}
\frac{d}{dr}\left(r^{2-m}\int_{B_r}(|\nabla u|^2-\beta(u)|\nabla v|^2)dx\right)=r^{2-m}\int_{\partial B_r}(|\frac{\partial u}{\partial r}|^2-\beta(u)|\frac{\partial v}{\partial r}|^2).
\end{align*}
The conclusion of the lemma follows by integrating $r$ from $r_1$ to $r_2$.
\end{proof}

\

As a direct corollary of above monotonicity formula, we have
\begin{cor}\label{cor:monotonicity}
Let $(u,v)\in W^{1,2}(\Omega,N\times \R)$ be a stationary Lorentzian harmonic map with Dirichlet boundary data $(\phi,\psi)$. Then for any $x_0\in\Omega$ and $0<r_1\leq r_2<dist(x_0,\partial\Omega)$, there holds
\begin{align*}
r_1^{2-m}\int_{B_{r_1}(x_0)}|\nabla u|^2dx\leq r_2^{2-m}\int_{B_{r_2}(x_0)}|\nabla u|^2dx+C(m,p,\lambda_1,\lambda_2,\Omega,\|\psi\|_{C^0(\partial\Omega)})(r_2)^{2-\frac{2m}{p}}.
\end{align*}
\end{cor}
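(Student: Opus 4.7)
The plan is to deduce this corollary directly from the monotonicity formula in Lemma \ref{lem:monotonicity} combined with the $W^{1,p}$ estimate on $v$ from Lemma \ref{lem:-02}. The idea is that the monotonicity formula controls the Lorentzian energy $r^{2-m}\int_{B_r}(|\nabla u|^2-\beta(u)|\nabla v|^2)dx$, but we want a statement about $|\nabla u|^2$ alone; the discrepancy is a $|\nabla v|^2$ term, which we treat as an error by absorbing it into a power of $r_2$ via Hölder's inequality.

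First, I would drop the nonnegative radial term on the right-hand side of Lemma \ref{lem:monotonicity} and rearrange to obtain
\begin{align*}
r_1^{2-m}\int_{B_{r_1}(x_0)}|\nabla u|^2 dx
&\leq r_2^{2-m}\int_{B_{r_2}(x_0)}|\nabla u|^2 dx\\
&\quad+r_1^{2-m}\int_{B_{r_1}(x_0)}\beta(u)|\nabla v|^2 dx-r_2^{2-m}\int_{B_{r_2}(x_0)}\beta(u)|\nabla v|^2 dx.
\end{align*}
Since $\beta(u)\geq\lambda_1>0$, the last term is nonpositive and may be discarded. It then suffices to bound $r_1^{2-m}\int_{B_{r_1}(x_0)}\beta(u)|\nabla v|^2 dx$ by a suitable power of $r_2$.

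Next, for $p>m$, I would apply Hölder's inequality together with $\beta(u)\leq\lambda_2$:
\begin{equation*}
\int_{B_{r_1}(x_0)}\beta(u)|\nabla v|^2 dx
\leq\lambda_2 |B_{r_1}|^{1-2/p}\|\nabla v\|_{L^p(B_{r_1}(x_0))}^{2}
\leq C(m,\lambda_2)\, r_1^{m-2m/p}\|\nabla v\|_{L^p(\Omega)}^{2}.
\end{equation*}
By Lemma \ref{lem:-02}, $\|\nabla v\|_{L^p(\Omega)}\leq C(p,\lambda_1,\lambda_2,\Omega)\|\psi\|_{C^0(\partial\Omega)}$, so
\begin{equation*}
r_1^{2-m}\int_{B_{r_1}(x_0)}\beta(u)|\nabla v|^2 dx
\leq C(m,p,\lambda_1,\lambda_2,\Omega,\|\psi\|_{C^0(\partial\Omega)})\, r_1^{2-2m/p}.
\end{equation*}
Finally, since $p>m$ gives $2-2m/p>0$, the inequality $r_1\leq r_2$ allows us to replace $r_1^{2-2m/p}$ by $r_2^{2-2m/p}$, yielding the claimed bound.

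The proof is essentially a one-step manipulation once Lemmas \ref{lem:monotonicity} and \ref{lem:-02} are in place; there is no real obstacle. The only point requiring slight care is that the power of $r_2$ is useful only when positive, which forces the implicit restriction $p>m$ when applying the corollary; this is consistent with the fact that the constant on the right-hand side is allowed to depend on $p$.
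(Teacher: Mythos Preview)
Your first step contains a genuine error: the radial term on the right-hand side of Lemma~\ref{lem:monotonicity} is
\[
2\int_{B_{r_2}(x_0)\setminus B_{r_1}(x_0)}|x-x_0|^{2-m}\bigl(|\partial_r u|^2-\beta(u)|\partial_r v|^2\bigr)\,dx,
\]
which is \emph{not} nonnegative in general, because of the $-\beta(u)|\partial_r v|^2$ contribution. You therefore cannot simply drop it to obtain the displayed inequality; that inequality is unjustified as written.

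The repair is straightforward and is exactly what the paper does: split the radial term, discard only the nonnegative $|\partial_r u|^2$ piece, and keep
\[
2\int_{B_{r_2}(x_0)\setminus B_{r_1}(x_0)}|x-x_0|^{2-m}\beta(u)|\partial_r v|^2\,dx
\;\leq\;
2\lambda_2\int_{B_{r_2}(x_0)}|x-x_0|^{2-m}|\nabla v|^2\,dx
\]
as an additional error. This weighted integral is then bounded via H\"older's inequality, using that $|x-x_0|^{2-m}\in L^{p/(p-2)}(B_{r_2})$ for $p>m$, giving $C\,r_2^{2-2m/p}\|\nabla v\|_{L^p}^2$; Lemma~\ref{lem:-02} then finishes the estimate. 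Your treatment of the term $r_1^{2-m}\int_{B_{r_1}}\beta(u)|\nabla v|^2\,dx$ is fine, but it is not the only $v$-term that needs to be controlled.
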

\begin{proof}
By Lemma \ref{lem:monotonicity}, we have
\begin{align*}
r_1^{2-m}\int_{B_{r_1}(z)}|\nabla u|^2dx&\leq r_2^{2-m}\int_{B_{r_2}(x_0)}(|\nabla u|^2-\beta(u)|\nabla v|^2)dx+r_1^{2-m}\int_{B_{r_1}(x_0)}\beta(u)|\nabla v|^2dx\\&\quad+2\int_{B_{r_2}(x_0)}|x-x_0|^{2-m}\beta(u)|\frac{\partial v}{\partial |x-x_0|}|^2dx\\
&\leq
r_2^{2-m}\int_{B_{r_2}(x_0)}|\nabla u|^2dx+C(m,\lambda_2)(r_2)^{2-\frac{2m}{p}}\|\nabla v\|_{L^p}^2\\
&\leq
r_2^{2-m}\int_{B_{r_2}(x_0)}|\nabla u|^2dx+C(m,p,\lambda_1,\lambda_2,\Omega,\|\psi\|_{C^0(\partial\Omega)})(r_2)^{2-\frac{2m}{p}},
\end{align*}
where the second inequality follows from Young's inequality that
\begin{align}
\int_{B_r}|x|^{2-m}|\nabla v|^2dx\leq \|\nabla v\|_{L^{p}}^2\||x|^{2-m}\|_{L^{\frac{p}{p-2}}(B_r)}\leq C(m,p,\lambda_1,\lambda_2,\Omega,\|\psi\|_{C^0(\partial\Omega)})(r)^{2-\frac{2m}{p}}.
\end{align}
\end{proof}

\

In the  end of this section, we want to recall a regularity theorem for a system of critical PDE in \cite{B-Sharp}. Systems of this form were introduced and studied by \cite{RS}. For this, let us first recall the definition of Morrey spaces (see \cite{Giaquinta}).

\begin{defn}
For $p\geq 1$, $0<\mu \leq m$ and a domain $U\subset \R^m$, the Morrey space $M^{p,\mu}(U)$ is defined by $$M^{p,\mu}(U):=\{f\in L^p_{loc}(U)|\ \|f\|_{M^{p,\mu}(U)}<\infty\}$$ where $$\|f\|^p_{M^{p,\mu}(U)}:=\sup_{B_r\subset U}r^{\mu-m}\int_{B_r}|f|^p.$$
\end{defn}

\

\begin{thm}[Theorem 1.2, \cite{B-Sharp}]\label{thm:01}
For every $m\geq 2$ and $p\in(\frac{m}{2},m)$, there exists $\epsilon=\epsilon(m,d,p)>0$ and $C=C(m,d,p)>0$ with the following property. Suppose that $u\in W^{1,2}(B_1,\R^d)$, $\nabla u\in M^{2,2}(B_1,\R^d)$, $\Omega\in M^{2,2}(B_1,so(d)\otimes\wedge^1\R^m)$ and $f\in L^p(B_1,\R^d)$, satisfy
\begin{equation}\label{equation:04}
\Delta u=\Omega\cdot\nabla u+ f\ in\  B_1
\end{equation} weakly. If $\|\Omega\|_{M^{2,2}(B_1)}\leq \epsilon$, then $$\|\nabla^2u\|_{M^{\frac{2p}{m},2}(B_{1/2})}+\|\nabla u\|_{M^{\frac{2p}{m-p},2}(B_{1/2})}\leq C(\|u\|_{L^1(B_1)}+\|f\|_{L^p(B_1)}).$$
\end{thm}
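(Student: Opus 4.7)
The plan is to adapt the Rivière gauge decomposition for critical antisymmetric systems to the Morrey space setting, following the Rivière--Struwe/Sharp framework. The central idea is that the antisymmetry of $\Omega$ encodes a hidden divergence structure that becomes manifest after a suitable change of frame. Concretely, under the smallness assumption $\|\Omega\|_{M^{2,2}(B_1)} \leq \epsilon$ I would first construct a Coulomb-type gauge: matrices $A \in W^{1,2}(B_1, GL(d)) \cap L^\infty$ with $A$ close to the identity and $B \in W^{1,2}(B_1, so(d) \otimes \wedge^2 \R^m)$ satisfying
\begin{equation}
\nabla A - A \Omega = \nabla^\perp B, \qquad \mathrm{div}(A) = 0,
\end{equation}
together with Morrey-norm estimates $\|\nabla A\|_{M^{2,2}} + \|\nabla B\|_{M^{2,2}} \leq C\|\Omega\|_{M^{2,2}}$. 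This is the step that requires the antisymmetry of $\Omega$ essentially, and must be carried out in $M^{2,2}$ rather than $L^2$ to preserve scaling invariance; I would obtain it by a fixed point / continuity argument on a nonlinear map whose linearization is an elliptic system, with the smallness of $\epsilon$ used to close the iteration.

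Second, I would use this gauge to rewrite the equation \eqref{equation:04} in divergence form. Multiplying by $A$ and using the gauge relation produces
\begin{equation}
\mathrm{div}(A \nabla u) = \nabla^\perp B \cdot \nabla u + A f,
\end{equation}
where the first term on the right has Jacobian (null-Lagrangian) structure. This is the key gain: a critical system has been converted into a divergence-form equation with a lower-order right-hand side.

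Third, I would run elliptic regularity in Morrey spaces. Hodge-decompose $A \nabla u$ into a gradient and a divergence-free part; the divergence-free part is controlled by the Jacobian term via a Morrey-space Wente estimate (the product of two $M^{2,2}$ functions whose structure is a Jacobian lies in an improved Hardy-Morrey space), while the gradient part solves $\Delta \phi = \mathrm{div}(Af) + \mathrm{div}(\nabla^\perp B \cdot \nabla u)$. The $L^p$ input $f$ is transferred to $\nabla^2 u$ via the standard mapping properties of the Riesz potential in Morrey spaces, yielding $\nabla^2 u \in M^{2p/m, 2}$ and, by Morrey-Sobolev embedding in Morrey scales, $\nabla u \in M^{2p/(m-p), 2}$. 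The nonlinear term $\nabla^\perp B \cdot \nabla u$ is absorbed into the left side using $\|B\|_{M^{2,2}} \lesssim \epsilon$, which is exactly where the smallness of $\epsilon$ is consumed a second time.

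The main obstacle will be the first step: building the Coulomb frame with quantitative $M^{2,2}$ bounds. The naive approach of solving $\Delta P = \nabla \cdot (P \Omega)$ for $P \in SO(d)$ by Uhlenbeck's iteration produces $L^2$ estimates, but a Morrey improvement requires either an a priori Morrey bound propagated along the iteration or a direct fixed-point argument on a Morrey ball — one has to check that all the bilinear/nonlinear terms arising (products $P\Omega$, commutators, etc.) respect the Morrey scaling, which is the delicate technical core of the theorem. Once the gauge is in hand with the correct norms, the bootstrap to the stated conclusion is a relatively routine Hodge/Riesz potential argument, with constants depending only on $m, d, p$ as claimed.
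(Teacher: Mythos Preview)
The paper does not prove this theorem at all: it is quoted verbatim as Theorem~1.2 of \cite{B-Sharp} and used as a black box in the proof of Theorem~\ref{thm:small-energy-regularity}. So there is no ``paper's own proof'' to compare against.

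That said, your sketch is a faithful outline of the strategy in the cited reference: the Rivi\`ere--Struwe Coulomb gauge construction carried out in Morrey spaces, followed by rewriting the system in divergence form with a Jacobian-structured right-hand side, and then bootstrapping via Morrey-space Wente/Riesz potential estimates. One notational caveat: you write $\nabla^\perp B$, which is two-dimensional notation; in the actual $m$-dimensional setting of \cite{B-Sharp} (and of Rivi\`ere--Struwe \cite{RS}) the gauge relation reads $dA - A\Omega = -\delta B$ with $B$ a $2$-form, and the ``Jacobian'' structure is that $\delta B \cdot \nabla u$ is a sum of $2\times 2$ subdeterminants. This does not affect the argument, but the higher-dimensional Hodge formulation is what is actually needed here since the theorem is stated for all $m\geq 2$.
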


\

\section{Proof of Theorem \ref{thm:small-energy-regularity} and Theorem \ref{thm:main-1}}\label{sec:proof of mainthm}
In this section, we will prove Theorem \ref{thm:small-energy-regularity} and Theorem \ref{thm:main-1}.

\begin{proof}[\textbf{Proof of Theorem \ref{thm:small-energy-regularity}}]
Without loss of generality, we may assume $r_0=1$ and $$\frac{1}{|B_1|}\int_{B_1}v dx=0.$$ Taking a cut-off function $\eta\in C^\infty_0(B_1)$ such that $0\leq\eta\leq 1$, $\eta|_{B_{7/8}}\equiv 1$ and $|\nabla\eta|\leq C$. By a direct computation, we get $$div(\beta(u)\nabla(\eta v))=div(\beta(u)\nabla\eta v)+\beta(u)\nabla\eta\nabla v\ in \ B_1.$$ Then according to the standard theory of second elliptic operator of divergence form (cf. Theorem 1 in \cite{Meyers}), we have $v\in W^{1,\frac{2m}{m-2}}(B_{7/8})$ and
\begin{align*}
\|\nabla v\|_{L^{\frac{2m}{m-2}}(B_{7/8})}&\leq C(m,\lambda_1,\lambda_2)(\|\nabla\eta v\|_{L^{\frac{2m}{m-2}}(B_1)}+\|\beta(u)\nabla\eta\nabla v\|_{L^2(B_1)})\\
&\leq C(m,\lambda_1,\lambda_2)\|\nabla v\|_{L^2(B_1)},
\end{align*}
where the last inequality follows from Sobolev's embedding $W^{1,2}\hookrightarrow L^{\frac{2m}{m-2}}$ and Poincare's inequality $$\|v\|_{L^2(B_1)}\leq C(m)\|\nabla v\|_{L^2(B_1)}.$$

Using Theorem 1 in \cite{Meyers} and by a bootstrap argument, it is easy to see that $v\in W^{1,p}(B_{3/4})$ for any $1<p<\infty$ and
\begin{align}\label{inequality:01}
\|\nabla v\|_{L^p(B_{3/4})}\leq C(m,p,\lambda_1,\lambda_2)\|\nabla v\|_{L^2(B_1)}.
\end{align}

It is well known that the equation of $u$ can be written as the form of \eqref{equation:04} with $$|\Omega|\leq C(N)|\nabla u|\ \ and\ \ |f|\leq C(\lambda_2,N)|\nabla v|^2.$$ By Theorem \ref{thm:01} and \eqref{inequality:01}, taking $\epsilon_0=\epsilon_0(m,p,N)$ sufficient small, we know $u\in W^{1,p}(B_{5/8})$ for any $m<p<\infty$ and
\begin{align*}
\|\nabla u\|_{L^p(B_{5/8})}&\leq C(m,p,\lambda_1,\lambda_2,N)(\|\nabla u\|_{L^2(B_1)}+\||\nabla v|^2\|_{L^{\frac{mp}{2+p}}(B_1)})\\
&\leq C(m,p,\lambda_1,\lambda_2,N)(\|\nabla u\|_{L^2(B_1)}+\|\nabla v\|^2_{L^2(B_1)}).
\end{align*}
Applying $W^{2,p}$ estimates of Laplacian operator, we obtain
\begin{align*}
  \|\nabla u\|_{W^{1,p}(B_{9/16})}&\leq C(m,p,\lambda_2,N)(\|\nabla u\|^2_{L^{2p}(B_{5/8})}+\|\nabla v\|^2_{L^{2p}(B_{5/8})}+\|\nabla u\|_{L^{2}(B_{5/8})})\\
  &\leq C(m,p,\lambda_1,\lambda_2,N)(\|\nabla u\|_{L^{2}(B_{5/8})}+\|\nabla u\|^2_{L^2(B_1)}+\|\nabla v\|^2_{L^2(B_1)}+\|\nabla v\|^4_{L^2(B_1)})
\end{align*}
and
\begin{align*}
  \|\nabla v\|_{W^{1,p}(B_{9/16})}&\leq C(m,p,\lambda_1,\lambda_2,N)(\||\nabla u||\nabla v|\|_{L^{p}(B_{5/8})}+\|\nabla v\|_{L^{2}(B_{5/8})})\\
 &\leq C(m,p,\lambda_1,\lambda_2,N)\|\nabla v\|_{L^2(B_1)}(1+\|\nabla u\|_{L^2(B_1)}+\|\nabla v\|^2_{L^2(B_1)}).
\end{align*}
By Sobolev's embedding theorem, we see that $(\nabla u,\nabla v)\in C^{\alpha}(B_{9/16})$ for any $\alpha=1-\frac{m}{p}\in (0,1)$ and the estimate \eqref{inequality:09} holds. Then the high regularity follows from the classical Schauder estimates of Laplacian operator and a standard bootstrap argument.
\end{proof}

\

Now, we prove our main Theorem \ref{thm:main-1}.
\begin{proof}[\bf{Proof of Theorem \ref{thm:main-1}}]

Define
\begin{align}
S(u):=
\{x\in\Omega:\liminf_{r\searrow 0}r^{2-n}\int_{B_r(x)}|\nabla u|^2\geq\frac{\epsilon_0^2}{2^m}\}
\end{align}
where $\epsilon_0>0$ is the constant in Theorem \ref{thm:small-energy-regularity}. It is well known that $H^{n-2}(S(u))=0$. Next, we will show $S(u)$ is a closed set and $(u,v)\in C^\infty(\Omega\setminus S(\phi))$.

For any $x_0\in \Omega\setminus S(u)$ and $\epsilon>0$, there exists $0<r_0<\epsilon$ such that,
\begin{align}
(2r_0)^{2-m}\int_{B_{2r_0}(x_0)}|\nabla u|^2dx <\frac{\epsilon_0^2}{2^m}.
\end{align}
Therefore,
\begin{align}
\sup_{z\in B_{r_0}(x_0)}r_0^{2-m}\int_{B_{r_0}(z)}|\nabla u|^2dx\leq r_0^{2-m}\int_{B_{2r_0}(x_0)}|\nabla u|^2dx <\frac{2^{m-2}\epsilon_0^2}{2^m}.
\end{align}

By Corollary \ref{cor:monotonicity}, we have
\begin{align}\label{inequality:08}
\sup_{z\in B_{r_0}(x_0),0<r\leq r_0}r^{2-m}\int_{B_{r}(z)}|\nabla u|^2dx&\leq
\sup_{z\in B_{r_0}(x_0)}r_0^{2-m}\int_{B_{r_0}(z)}|\nabla u|^2dx+C(m,p,\lambda_1,\lambda_2,\|\psi\|_{C^0(\partial\Omega)})(r_0)^{2-\frac{2m}{p}}\notag\\
&\leq
\frac{2^{m-2}\epsilon_0^2}{2^m}+C_1(m,p,\lambda_1,\lambda_2,\|\psi\|_{C^0(\partial\Omega)})(r_0)^{2-\frac{2m}{p}}
\end{align}
for some $m<p<\infty$, where $C_1(m,p,\lambda_1,\lambda_2,\|\psi\|_{C^0(\partial\Omega)})$ is a positive constant.

Taking $\epsilon\leq (\frac{\epsilon_0^2}{4C_1(m,p,\lambda_1,\lambda_2,\|\psi\|_{C^0(\partial\Omega)})})^{\frac{2m}{p}-2}$, we get
\begin{align}\label{inequality:02}
\sup_{z\in B_{r_0}(x_0),0<r\leq r_0}r^{2-m}\int_{B_{r}(z)}|\nabla u|^2dx
\leq \frac{\epsilon_0^2}{2}.
\end{align}
Then Theorem \ref{thm:small-energy-regularity} tells us that $(u,v)\in C^\infty(B_{r_0/2}(x_0))$ which implies $B_{r_0/4}(x_0)\subset \Omega\setminus S(u)$. We finished the proof.
\end{proof}

\

\section{Proof of Theorem \ref{thm:main-2}}\label{sec:gradient estimate}

In this section, we will study the blow-up behavior of a sequence of stationary Lorentzian harmonic map $\{(u_n,v_n)\}$ with Dirichlet boundary data $(\phi,\psi)$ and with bounded energy $$E(u_n,v_n)=\frac{1}{2}\int_\Omega(|\nabla u_n|^2+|\nabla v_n|^2)dx\leq \Lambda.$$ Due to the weak compactness, we may assume $u_n\rightharpoonup u$ weakly in $W^{1,2}(\Omega,N)$ and $$\mu_n:=|\nabla u_n|^2dx\to\mu:=|\nabla u|^2dx+\nu$$ in the sense of Radon measures, where $\nu$ is a nonnegative Radon measure by Fatou's lemma which is usually called the defect measure.

Without loss of generality, we assume $B_1(0)\subseteq\Omega$. Similar to harmonic maps \cite{Lin}, we define the energy concentration set $\Sigma$ as follows
\begin{align}\label{Def:01}
\Sigma=\{x\in B_1(0)|\liminf_{r\searrow 0}\liminf_{n\to\infty}r^{2-n}\int_{B_r(x)}|\nabla u_n|^2dx\geq\frac{\epsilon_0^2}{2^m}\},
\end{align}
where $\epsilon_0$ is the constant in Theorem \ref{thm:small-energy-regularity}.

\

Denoting $spt(\nu)$ the support set of $\nu$ and $$sing(u):=\{x\in B_1(0)|u \mbox{ is not smooth at x }\},$$ then we have
\begin{lem}
Suppose $\{(u_n,v_n)\}$ is a sequence of stationary Lorentzian harmonic map with Dirichlet boundary data $(u_n,v_n)|_{\Omega}=(\phi,\psi)$ and bounded energy $E(u_n,v_n)\leq\Lambda$, then the energy concentration set $\Sigma$ is closed in $B_1$ and $H^{m-2}(\Sigma)\leq C(m,\epsilon_0,\Lambda)$. Moreover, there holds \begin{equation}\label{equation:05}
\Sigma=spt(\nu)\cup sing(u).
\end{equation}
\end{lem}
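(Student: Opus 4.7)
The plan is to adapt Lin's concentration-compactness scheme (cf.\ \cite{Lin}) to the Lorentzian setting, with the almost-monotonicity from Corollary \ref{cor:monotonicity} replacing the classical harmonic-map monotonicity. The one uniformity that makes this work is that all $v_n$ share the boundary datum $\psi$: by Lemma \ref{lem:-02}, $\|\nabla v_n\|_{L^p(\Omega)}\le C(p,\lambda_1,\lambda_2,\Omega,\|\psi\|_{C^0(\partial\Omega)})$ for every $p<\infty$ independently of $n$, so the defect term $Cr^{2-2m/p}$ in Corollary \ref{cor:monotonicity} is $n$-uniform.

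First I would pass the almost-monotonicity to the measure-theoretic limit. Combining $\liminf_n\mu_n(B_r)\ge\mu(B_r)$ and $\limsup_n\mu_n(\bar B_r)\le\mu(\bar B_r)$ with Corollary \ref{cor:monotonicity} applied to each $u_n$, one gets, for every $x\in B_1$ and $0<r_1\le r_2<1-|x|$,
\[
r_1^{2-m}\mu(B_{r_1}(x))\le r_2^{2-m}\mu(\bar B_{r_2}(x))+Cr_2^{2-2m/p}.
\]
Hence the density $\Theta(x):=\lim_{r\searrow 0}r^{2-m}\mu(B_r(x))$ exists pointwise, and a short double-$\liminf$ comparison identifies $\Theta(x)$ with the $\liminf_r\liminf_n$ in \eqref{Def:01}, so $\Sigma=\{x:\Theta(x)\ge\epsilon_0^2/2^m\}$. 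Upper semicontinuity of $x\mapsto\mu(\bar B_r(x))$ combined with the displayed inequality makes $\Theta$ upper semicontinuous, hence $\Sigma$ is closed. A Vitali covering argument, applied to balls $B_r(x_i)$ each carrying at least $c\epsilon_0^2r^{m-2}$ of $\mu$-mass, together with $\mu(B_1)\le C\Lambda$, then yields $H^{m-2}(\Sigma)\le C(m,\epsilon_0,\Lambda)$.

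For the identity \eqref{equation:05}, the central tool is the small-energy regularity Theorem \ref{thm:small-energy-regularity}. Suppose $x\notin\Sigma$; then along some subsequence $n_k$ there exists $r_0>0$ with $r_0^{2-m}\int_{B_{r_0}(x)}|\nabla u_{n_k}|^2<\epsilon_0^2/2^m$. Choosing $r_0$ small enough that $Cr_0^{2-2m/p}<\epsilon_0^2/4$, and applying Corollary \ref{cor:monotonicity} with centers $y\in B_{r_0/4}(x)$ and radii $r\le r_0/4$, I would upgrade this pointwise bound to the $(y,r)$-uniform small-energy hypothesis needed for Theorem \ref{thm:small-energy-regularity} on $B_{r_0/4}(x)$. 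That theorem furnishes uniform $C^{1,\alpha}$ bounds on $u_{n_k}$; passing to the weak limit gives $u\in C^{1,\alpha}(B_{r_0/8}(x))$, and the $u$-equation in \eqref{equation:LHM} then bootstraps to $u\in C^\infty$ near $x$. Moreover $u_{n_k}\to u$ in $C^1_{loc}$ forces $\mu_{n_k}\to|\nabla u|^2\,dx$ locally near $x$, so $\nu=0$ there. This proves $\text{spt}(\nu)\cup\text{sing}(u)\subseteq\Sigma$. The reverse inclusion is immediate: if $\nu=0$ on some $B_r(x)$ and $u$ is smooth at $x$, then $\Theta(x)=\lim_{r\searrow 0}r^{2-m}\int_{B_r(x)}|\nabla u|^2=0$, so $x\notin\Sigma$.

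The main obstacle is the step where a density bound at the single point $x$ is promoted to the full $(y,r)$-uniform hypothesis of Theorem \ref{thm:small-energy-regularity}; this absorption of the Lorentzian correction into $\epsilon_0^2/2$ works only because Lemma \ref{lem:-02} gives an $n$-uniform $L^p$-bound on $\nabla v_n$, providing uniform control of the correction term in Corollary \ref{cor:monotonicity}.
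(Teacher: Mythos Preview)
Your proposal is correct and follows the same Lin-style scheme as the paper, using Corollary \ref{cor:monotonicity} (with the $n$-uniform $L^p$ bound on $\nabla v_n$ from Lemma \ref{lem:-02}) together with Theorem \ref{thm:small-energy-regularity} in the same way for the identity $\Sigma=\mathrm{spt}(\nu)\cup\mathrm{sing}(u)$ and the Hausdorff bound. The only cosmetic difference is in the closedness argument: you pass the almost-monotonicity to the limit measure $\mu$, identify $\Sigma$ with the superlevel set $\{\Theta\ge\epsilon_0^2/2^m\}$, and use upper semicontinuity of $\Theta$, whereas the paper argues more directly by first applying Theorem \ref{thm:small-energy-regularity} to obtain uniform $L^\infty$ gradient bounds near any $x_0\notin\Sigma$ and then observing that these bounds force $r^{2-m}\int_{B_r(y)}|\nabla u_n|^2<\epsilon_0^2/2^{m+1}$ for all $y$ in a neighborhood of $x_0$ and all sufficiently small $r$.
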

\begin{proof}
For $x_0\in B_1\setminus\Sigma$, by the definition of $\Sigma$, we know that for any positive constant $$\epsilon\leq (\frac{\epsilon_0^2}{4C_1(m,p,\lambda_1,\lambda_2,\|\psi\|_{C^0(\partial\Omega)})})^{\frac{2m}{p}-2},$$ where $C_1(m,p,\lambda_1,\lambda_2,\|\psi\|_{C^0(\partial\Omega)})$ is the constant in \eqref{inequality:08}, there exists a positive constant $r_0<\epsilon$ and a subsequence of $\{n\}$ (also denoted by $\{n\}$), such that, for any $n$, there holds
\begin{align*}
(2r_0)^{2-m}\int_{B_{2r_0}(x)}|\nabla u_n|^2dx<\frac{\epsilon_0^2}{2^m},
\end{align*}
which implies (similar to deriving \eqref{inequality:02})
\begin{align*}
\sup_{z\in B_{r_0}(x),0<r\leq r_0}r^{2-m}\int_{B_{r}(z)}|\nabla u_n|^2dx<\frac{\epsilon_0^2}{2}.
\end{align*}
By Theorem \ref{thm:main-1}, we know
\begin{align}\label{inequality:03}
\|\nabla u_n\|_{L^\infty(B_{r_0/2}(x_0))}+\|\nabla v_n\|_{L^\infty(B_{r_0/2}(x_0))}\leq C(m,\lambda_1,\lambda_2,\Lambda,N)r_0^{-\frac{m}{2}}.
\end{align}

Then, it is easy to see that there exists a small positive constant $r_1=r_1(m,r_0,\lambda_1,\lambda_2,\Lambda,\epsilon_0,N)$, such that, whenever $r\leq r_1$,
\begin{align*}
\sup_{x\in B_{r_0/4}(x_0)}r^{2-m}\int_{B_{r}(x)}|\nabla u_n|^2dx<\frac{\epsilon_0^2}{2^{m+1}}.
\end{align*}
Thus, $B_{r_0/4}(x_0)\subset B_1\setminus\Sigma$. So, $\Sigma$ is a closed set.

It is standard to get $H^{m-2}(\Sigma)\leq C$ by a covering lemma (cf. \cite{Lin}).

For \eqref{equation:05}, on the one hand, let $x_0\in B_1\setminus \Sigma$. Then \eqref{inequality:03} holds and by standard elliptic estimates of Laplace operator, we have
\begin{align}
\|u_n\|_{C^{1+\alpha}(B_{r_0/4}(x_0))}+\|v_n\|_{C^{1+\alpha}(B_{r_0/4}(x_0))}\leq C,
\end{align}
for some $0<\alpha<1$. Thus, up to a subsequence of $\{u_n,v_n\}$, $u_n\to u$ strongly in $W^{1,2}$ and $u\in C^\infty (B_{r_0/8}(x_0))$ which implies that $x_0\notin sing(u)$ and $x_0\notin spt \nu$ since $\nu\equiv0$ on $B_{r_0/8}(x_0)$.

On the other hand, if $x_0\in\Sigma$, by the definition, for any $r>0$ sufficient small, we have $$\liminf_{n\to\infty}\frac{\mu_n(B_r(x_0))}{r^{m-2}}\geq \frac{\epsilon_0^2}{2^{m+1}},$$ which implies, $$\frac{\mu(B_r(x_0))}{r^{m-2}}\geq \frac{\epsilon_0^2}{2^{m+1}}$$ for $a.e.$ $r>0$. Suppose $x_0\notin sing(\phi)$, then $$r^{2-m}\int_{B_r(x_0)}|\nabla u|^2dx\leq\frac{\epsilon_0^2}{2^{m+2}}$$ whenever $r>0$ is small enough. Then we have $$\frac{\nu(B_r(x_0))}{r^{m-2}}\geq \frac{\epsilon_0^2}{2^{m+2}}$$ for all small positive $r>0$ and $x_0\in spt\nu$. This finishes the proof of lemma.
\end{proof}

\

\begin{lem}
Under the same assumption of above lemma, the limit
\begin{equation}
\theta_\nu (x):=\lim_{r\to  0}\frac{\nu (B_r(x))}{r^{m-2}}
\end{equation}
exists for $H^{m-2}$ a.e. $x\in \Sigma$. Moreover, $$\frac{\epsilon_0^2}{2^m}\leq \theta_\nu (x)\leq C(m,\lambda_1,\lambda_2,\Lambda,N,\|\psi\|_{C^0(\partial \Omega)})\delta_0^{2-m},$$ where $\delta_0:=dist(B_1(0),\partial \Omega)$.
\end{lem}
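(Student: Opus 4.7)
The plan is to prove the lemma in three conceptual steps: pass Corollary \ref{cor:monotonicity} to the limit $n\to\infty$ to obtain an almost-monotonicity for $\mu$, use this to deduce existence of the density $\theta_\nu$, and then derive the two bounds.

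Since all $(u_n,v_n)$ share the same Dirichlet datum $\psi$, Lemma \ref{lem:-02} gives an $L^p(\Omega)$-bound on $\nabla v_n$ that is uniform in $n$ for every fixed $p\in(m,\infty)$. Therefore Corollary \ref{cor:monotonicity} applied to each $(u_n,v_n)$ reads
\[
r_1^{2-m}\int_{B_{r_1}(x_0)}|\nabla u_n|^2\,dx \;\le\; r_2^{2-m}\int_{B_{r_2}(x_0)}|\nabla u_n|^2\,dx + C\,r_2^{2-\frac{2m}{p}},\qquad 0<r_1\le r_2<\delta_0,
\]
with $C=C(m,p,\lambda_1,\lambda_2,\|\psi\|_{C^0(\partial\Omega)})$ independent of $n$. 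Passing to the Radon-measure limit $|\nabla u_n|^2\,dx\to\mu$, using that $\mu_n(B_r)\to\mu(B_r)$ at the co-countably many continuity radii of $\mu$, and extending by left-continuity of $r\mapsto\mu(B_r)$, I would obtain the almost-monotonicity
\[
r_1^{2-m}\mu(B_{r_1}(x_0))\;\le\; r_2^{2-m}\mu(B_{r_2}(x_0))+C\,r_2^{2-\frac{2m}{p}},\qquad 0<r_1\le r_2<\delta_0.
\]

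Writing $\Psi(r):=r^{2-m}\mu(B_r(x_0))$, the previous inequality yields $\limsup_{r\to 0}\Psi(r)\le\Psi(r_2)+Cr_2^{2-2m/p}$ for every $r_2$; since $2-2m/p>0$, choosing $r_2$ along a sequence realizing $\liminf_{r\to 0}\Psi(r)$ and sending $r_2\to 0$ gives $\limsup\Psi\le\liminf\Psi$, so $\theta(x_0):=\lim_{r\to 0}\Psi(r)$ exists for every $x_0$. To pass from $\mu$ to $\nu$, the standard fact (proved by a Vitali covering argument using $|\nabla u|^2\in L^1(\Omega)$) that $\lim_{r\to 0}r^{2-m}\int_{B_r(x_0)}|\nabla u|^2\,dx=0$ for $H^{m-2}$-a.e.\ $x_0\in B_1(0)$ gives $\theta_\nu(x_0)=\theta(x_0)$ at $H^{m-2}$-a.e.\ $x_0\in\Sigma$.

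For the lower bound, fix $x_0\in\Sigma$ and $\delta>0$; the definition \eqref{Def:01} of $\Sigma$ provides an $r_\delta>0$ such that $\liminf_n r^{2-m}\mu_n(B_r(x_0))\ge \epsilon_0^2/2^m-\delta$ for all $0<r<r_\delta$. Selecting such an $r$ that is also a continuity radius of $\mu$, weak Radon convergence yields $r^{2-m}\mu(B_r(x_0))\ge \epsilon_0^2/2^m-\delta$; letting $r\to 0$ (using existence of $\theta_\nu$) and then $\delta\to 0$ gives $\theta_\nu(x_0)\ge\epsilon_0^2/2^m$. For the upper bound, take $r_1\to 0$ and $r_2$ to a continuity radius arbitrarily close to $\delta_0$ in the almost-monotonicity to obtain $\theta_\nu(x_0)\le\delta_0^{2-m}\mu(B_{\delta_0}(x_0))+C\delta_0^{2-2m/p}$; combined with $\mu(B_{\delta_0}(x_0))\le\mu(\Omega)\le\liminf_n\int_\Omega|\nabla u_n|^2\le 2\Lambda$ and absorption of the lower-order term, this gives the desired bound. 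The main technical point is justifying the passage $n\to\infty$ in the monotonicity formula; everything hinges on the uniform $L^p$-control ($p>m$) of $\nabla v_n$ from Lemma \ref{lem:-02}, which rules out concentration of the $v$-contribution. Beyond this, the argument is a routine adaptation of Lin's density argument for harmonic maps (cf.\ \cite{Lin}).
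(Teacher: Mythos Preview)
Your proposal is correct and follows essentially the same route as the paper: pass Corollary \ref{cor:monotonicity} to the Radon-measure limit to get an almost-monotonicity for $\mu$, deduce existence of $\lim_{r\to 0} r^{2-m}\mu(B_r(x))$ for every $x$, and use $\lim_{r\to 0} r^{2-m}\int_{B_r(x)}|\nabla u|^2\,dx=0$ for $H^{m-2}$-a.e.\ $x$ to transfer the density to $\nu$. The only noteworthy difference is that the paper goes one step further and records the Radon--Nikodym decomposition $\nu\lfloor\Sigma=\theta(x)\,H^{m-2}\lfloor\Sigma$ (via absolute continuity of $\mu\lfloor\Sigma$ with respect to $H^{m-2}\lfloor\Sigma$); this is not required for the lemma as stated but is implicitly used in the subsequent rectifiability corollary, so you may want to include it if you intend to invoke Preiss's theorem afterwards.
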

\begin{proof}
Let $x\in\Omega$ and $s_i\to 0$, $t_i\to 0$ be arbitrary two positive sequence, by Corollary \ref{cor:monotonicity}, we have
\begin{align}\label{inequality:04}
\frac{\mu_n(B_{s_i}(x))}{s_i^{m-2}}\leq \frac{\mu_n(B_{t_j}(x))}{t_j^{m-2}}+C(m,p,\lambda_1,\lambda_2,\Lambda,N,\|\psi\|_{C^0(\partial \Omega)})(t_j)^{2-\frac{2m}{p}}
\end{align}
for $s_i\leq t_j$ and some $m<p<\infty$. Letting firstly $i\to\infty$ and secondly $j\to\infty$, we get
\begin{align*}
\limsup_{r\to 0}\frac{\mu(B_{r}(x))}{r^{m-2}}\leq \liminf_{r\to 0}\frac{\mu(B_{r}(x))}{r^{m-2}}.
\end{align*}
Thus,
\begin{align*}
\lim_{r\to 0}\frac{\mu(B_{r}(x))}{r^{m-2}}
\end{align*}
exists. Noting that for $H^{m-2}$ a.e. $x\in\Omega$,
\begin{align}\label{equation:06}
\lim_{r\to 0}r^{2-m}\int_{B_r(x)}|\nabla u|^2dx=0,
\end{align}
therefore, we have
\begin{align*}
\lim_{r\to 0}\frac{\nu(B_{r}(x))}{r^{m-2}}= \lim_{r\to 0}\frac{\mu(B_{r}(x))}{r^{m-2}}.
\end{align*}

It is easy to see from \eqref{inequality:04} (taking $p=2m$) that \begin{align*}
r^{2-m}\mu(B_r(x))&\leq C(\Lambda)\delta_0^{2-m}+C(m,\lambda_1,\lambda_2,\Lambda,N,\|\psi\|_{C^0(\partial \Omega)})\delta_0\\&\leq C(m,\lambda_1,\lambda_2,\Lambda,N,\|\psi\|_{C^0(\partial \Omega)})\delta_0^{2-m}, \end{align*}
which implies $\mu\lfloor\Sigma$ is absolutely continuous with respect to $H^{m-2}\lfloor\Sigma$. By Radon-Nikodym theorem, we know that there exists a measurable function $\theta(x)$ such that $$\mu\lfloor\Sigma=\theta(x)H^{m-2}\lfloor\Sigma.$$

Noting that for $H^{m-2}$ a.e. $x\in\Sigma$, $$2^{2-m}\leq\liminf_{r\to 0}\frac{H^{m-2}(\Sigma\cap B_r(x))}{r^{m-2}}\leq\limsup_{r\to 0}\frac{H^{m-2}(\Sigma\cap B_r(x))}{r^{m-2}}\leq 1$$ and \eqref{equation:06}, we have $$\nu\lfloor\Sigma=\theta(x)H^{m-2}\lfloor\Sigma$$ and $$\frac{\epsilon_0^2}{2^m}\leq \theta_\nu (x)=\theta(x)\leq C(m,\lambda_1,\lambda_2,\Lambda,N,\|\psi\|_{C^0(\partial \Omega)})\delta_0^{2-m}.$$
\end{proof}

\

Since $\nu$ is absolutely continuous with respect to $H^{m-2}\lfloor\Sigma$ and $\nu=0$ outside $\Sigma$, $\theta_\nu(x)$ is positive for $\nu$-a.e. $x\in\Omega$. Hence by Preiss's results \cite{Preiss}, we have
\begin{cor}
The set of energy concentration points $\Sigma$ is $(m-2)$-rectifiable.
\end{cor}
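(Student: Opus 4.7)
The plan is to invoke Preiss's rectifiability theorem, which states that a Radon measure on $\R^m$ whose $k$-dimensional density exists, is finite, and is strictly positive almost everywhere is automatically $k$-rectifiable. All the hypotheses for $k=m-2$ have essentially been arranged in the preceding lemma, so the remaining task is just to verify them carefully for $\nu$.

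First, I would recall the conclusions of the previous lemma: the density
\[
\theta_\nu(x)=\lim_{r\to 0}\frac{\nu(B_r(x))}{r^{m-2}}
\]
exists for $H^{m-2}$-a.e.\ $x\in \Sigma$ and satisfies the two-sided bound
\[
\frac{\epsilon_0^2}{2^m}\leq \theta_\nu(x)\leq C(m,\lambda_1,\lambda_2,\Lambda,N,\|\psi\|_{C^0(\partial\Omega)})\delta_0^{2-m}.
\]
Second, I would observe that since $\Sigma = spt(\nu)\cup sing(u)$ and $\nu$ vanishes on the complement of $\Sigma$, the measure $\nu$ is concentrated on $\Sigma$, and the previous lemma shows $\nu\lfloor\Sigma=\theta(x)H^{m-2}\lfloor\Sigma$ with $\theta$ bounded above and below. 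Therefore the density $\theta_\nu(x)$ is finite and strictly positive for $\nu$-a.e.\ $x$.

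With these facts in hand, Preiss's theorem (as cited in \cite{Preiss}) applies directly: a Radon measure on $\R^m$ with positive and finite $(m-2)$-density almost everywhere is $(m-2)$-rectifiable, meaning it is concentrated on a countable union of Lipschitz images of subsets of $\R^{m-2}$. Since $\nu$ and $H^{m-2}\lfloor\Sigma$ are mutually absolutely continuous with a bounded density ratio, the rectifiability of $\nu$ transfers to $\Sigma$ itself, giving the $(m-2)$-rectifiability of the energy concentration set.

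The main technical input is clearly Preiss's deep density theorem itself, which is taken as a black box. The only nontrivial work internal to this proof is making sure both the upper and lower bounds on $\theta_\nu$ hold simultaneously $H^{m-2}$-a.e.\ on $\Sigma$, which is why the monotonicity formula (Corollary \ref{cor:monotonicity}) and the small-energy lower bound built into the definition of $\Sigma$ were both needed in the preceding lemma. No further analytic ingredients beyond what has already been established are required.
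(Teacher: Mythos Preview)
Your proposal is correct and matches the paper's approach exactly: the paper simply observes that $\nu$ is absolutely continuous with respect to $H^{m-2}\lfloor\Sigma$, that $\theta_\nu(x)$ is positive and finite for $\nu$-a.e.\ $x$, and then invokes Preiss's theorem \cite{Preiss} to conclude. Your write-up is in fact more detailed than the paper's one-line justification, including the explicit transfer of rectifiability from $\nu$ to $\Sigma$ via the mutual absolute continuity established in the preceding lemma.
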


\

For any $y\in\Sigma$ and $\lambda>0$, we define a scaled Radon measure $\mu_{y,\lambda}$ by $$\mu_{y,\lambda}(A)=\lambda^{2-m}\mu(y+\lambda A).$$ A Radon measure $\mu_*$ is called the tangent measure of $\mu$ at $y$ if $$\mu_{y,\lambda}\to\mu_*$$ in the sense of Radon measure as $r\searrow 0$ (See \cite{Federer,Simon}.).

\

\begin{lem}\label{lem:-03}
Suppose $H^{m-2}(\Sigma)>0$, then there exists a nonconstant harmonic sphere $S^2$ into $N$.
\end{lem}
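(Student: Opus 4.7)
The plan is to follow Lin's blow-up scheme \cite{Lin}, the new ingredient being that Lemma~\ref{lem:-02} supplies $L^p$-bounds on $\nabla v_n$ for every finite $p$, and this is precisely what is needed to eliminate the Lorentzian correction under rescaling.

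First, I would pick a good blow-up point on $\Sigma$. Since $\Sigma$ is $(m-2)$-rectifiable with $H^{m-2}(\Sigma)>0$ and $\theta_\nu\ge \epsilon_0^2/2^m$ on $\Sigma$, Preiss's theorem \cite{Preiss,Simon} guarantees that for $H^{m-2}$-a.e.\ $y\in\Sigma$ the tangent measure of $\mu$ at $y$ exists and equals $\theta_\nu(y)\,H^{m-2}\lfloor V_y$ for an $(m-2)$-plane $V_y$ through the origin. After translating and rotating, assume $y=0$ and $V_y=\R^{m-2}\times\{0\}$. For a sequence $\lambda_k\searrow 0$, rescale
\[
u_n^{(k)}(x):=u_n(\lambda_k x),\qquad v_n^{(k)}(x):=v_n(\lambda_k x),
\]
so that $(u_n^{(k)},v_n^{(k)})$ is again a stationary Lorentzian harmonic map on $\lambda_k^{-1}\Omega$. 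The crucial estimate is that, for any fixed $p>m$ and $R>0$, Lemma~\ref{lem:-02} gives
\[
\|\nabla v_n^{(k)}\|_{L^p(B_R)}^p = \lambda_k^{p-m}\int_{B_{\lambda_k R}(0)}|\nabla v_n|^p\,dx \le C\lambda_k^{p-m}\longrightarrow 0
\]
uniformly in $n$. Hence the $\beta(u)|\nabla v|^2$ correction in the Euler--Lagrange system \eqref{equation:LHM} and the stationary identity \eqref{equation:03} disappears in the blow-up limit.

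Next, by a standard diagonal extraction pick $n_k\to\infty$ so that $u_{n_k}^{(k)}\rightharpoonup u_\infty$ weakly in $W^{1,2}_{\mathrm{loc}}(\R^m,N)$ and the energy measures satisfy $|\nabla u_{n_k}^{(k)}|^2\,dx \to |\nabla u_\infty|^2\,dx + \nu_\infty$ on $\R^m$, with $\nu_\infty=\theta_\nu(0)\,H^{m-2}\lfloor V_y$ by the choice of $y$. Combined with the previous step, $u_\infty$ is a stationary harmonic map $\R^m\to N$ in the classical (non-Lorentzian) sense whose defect measure sits on the flat plane $V_y$ with constant positive density. Now I would apply Lin's analysis of tangent maps to $u_\infty$: picking a generic $z\in V_y$ and rescaling once more about $z$, the monotonicity formula of Lemma~\ref{lem:monotonicity} (which for $u_\infty$ has no $v$-terms) together with the flatness of $V_y$ force the resulting tangent map $U$ to be invariant under translations along $V_y$. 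Therefore $U(x',x'')=w(x'')$ for a stationary harmonic $w\colon\R^2\to N$ of finite energy whose defect measure is a positive point mass at the origin.

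Finally, $w$ is handled by Sacks--Uhlenbeck bubbling in dimension two: selecting maximal-concentration scales $r_k\searrow 0$ and centres $\zeta_k\to 0$, the rescaled maps $w(\zeta_k+r_k\,\cdot)$ converge, via the small-energy regularity Theorem~\ref{thm:small-energy-regularity}, locally smoothly on $\R^2$ to a nonconstant finite-energy harmonic map, which by the classical removable singularity theorem extends across $\infty$ to a nonconstant harmonic $S^2\to N$. I expect the main obstacle to be the second blow-up/translation-invariance step: one has to argue that the tangent map of $u_\infty$ at a generic point of $V_y$ is genuinely invariant under $V_y$-translations, which requires combining rectifiability of $\Sigma$ (so that $V_y$ is flat to first order), the monotonicity formula, and a careful iterated diagonal extraction. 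Everything else reduces, thanks to Lemma~\ref{lem:-02}, to the harmonic-map analysis of \cite{Lin}.
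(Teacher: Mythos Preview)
Your outline has a genuine gap at the second step. After the diagonal extraction you assert that the weak limit $u_\infty$ is itself a stationary harmonic map and then propose to run Lin's tangent-map analysis on $u_\infty$. Neither part is justified. Stationarity is a quadratic identity in $\nabla u$ and does \emph{not} pass to weak $W^{1,2}$-limits; there is no result guaranteeing that a weak limit of stationary maps is stationary (indeed, circumventing this is the whole point of Lin's scheme). More seriously, even if $u_\infty$ happened to be stationary, the energy concentration after the first blow-up lives in the defect measure $\nu_\infty=\theta_\nu(0)\,H^{m-2}\lfloor V_y$, not in $u_\infty$: it is perfectly possible that $u_\infty$ is constant. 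Rescaling $u_\infty$ at a point of $V_y$ then produces nothing, and your sentences about $w$ having ``a positive point mass'' as defect measure are a category error---a single map does not carry a defect measure.

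The paper (following \cite{Lin}) avoids this by never leaving the sequence. One applies the monotonicity formula of Lemma~\ref{lem:monotonicity} to each $(\widetilde u_n^1,\widetilde v_n^1)$; since the tangent \emph{measure} $\nu_*$ is a flat $(m-2)$-plane with constant density, the radial-derivative term vanishes in the limit and one deduces
\[
\lim_{n\to\infty}\sum_{k=1}^{m-2}\int_{B_2}\Bigl|\frac{\partial \widetilde u_n^1}{\partial x_k}\Bigr|^2\,dx=0
\]
directly on the sequence. A Hardy--Littlewood maximal-function argument then produces slices $x'_n$ where the rescaled maps are smooth and the tangential energy is uniformly small, and a second point-selection/blow-up $(\widetilde u_n^2,\widetilde v_n^2)$ is performed \emph{on the sequence}. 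Only at this final stage does one invoke the $\epsilon$-regularity of Theorem~\ref{thm:small-energy-regularity} to obtain $C^1_{\mathrm{loc}}$ convergence, and the $L^p$-bound on $\nabla v_n$ (which you correctly identify) kills $\widetilde v$ in the limit, yielding a nonconstant finite-energy harmonic map $\R^2\to N$ that extends to $S^2$. The fix to your argument is therefore to keep the sequence through both rescalings and to apply monotonicity term-by-term rather than to the weak limit.
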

\begin{proof}
Since $\Sigma$ is $(m-2)$-rectifiable and $ H^{m-2}(\Sigma)>0$, we know there exists a point $x_0\in\Sigma$, such that, $\nu$ has a tangent measure $\nu_*$ at $x_0$ and $$\nu_*=\theta_\nu(x_0)H^{m-2}\lfloor\Sigma_*$$ where $\Sigma_*\subset\R^m$ is a $(m-2)$ linear subspace which is usually called the tangent space of $\Sigma$ at $x_0$. Without loss of generality, we may assume $x_0=0$ and $\Sigma_*=\R^{m-2}\times\{(0,0)\}$.

By a similar diagonal argument as that in \cite{Lin}, there exists a sequence $r_n\to 0$, such that, $$\widetilde{\mu}_n^1:=|\nabla \widetilde{u}_n^1|^2dx\to \nu_*$$ in the sense of Radon measure, where $\widetilde{u}_n^1(x):=u_n(x_0+r_nx).$

Set $\widetilde{v}_n^1(x):=v_n(x_0+r_nx)$, it is easy to see that $(\widetilde{u}^1_n,\widetilde{v}^1_n)$ is also a stationary Lorentzian harmonic map. By Lemma \ref{lem:monotonicity}, we have
\begin{align}\label{equation:09}
&r_2^{2-m}\int_{B_{r_2}(0)}(|\nabla \widetilde{u}^1_n|^2-\beta(\widetilde{u}^1_n)|\nabla \widetilde{v}^1_n|^2)dx-r_1^{2-m}\int_{B_{r_1}(0)}(|\nabla \widetilde{u}^1_n|^2-\beta(\widetilde{u}^1_n)|\nabla \widetilde{v}^1_n|^2)dx\notag\\
&=
2\int_{r_1}^{r_2}r^{2-m}\int_{\partial B_r(0)}(|\frac{\partial \widetilde{u}^1_n}{\partial |x|}|^2-\beta(\widetilde{u}^1_n)|\frac{\partial \widetilde{v}^1_n}{\partial |x|}|^2)dH^{n-1}dr.
\end{align}

By Young's inequality, there holds
\begin{align}
\int_{B_r}|x|^{2-m}|\nabla \widetilde{v}^1_n|^2dx&\leq (r_n)^{2-\frac{2m}{p}}\|\nabla v_n\|_{L^{p}(B_{r_nr})}^2\||x|^{2-m}\|_{L^{\frac{p}{p-2}}(B_r)}\notag\\&\leq C(m,p,\lambda_1,\lambda_2,\Lambda,N,\|\psi\|_{C^0(\partial \Omega)})(r_nr)^{2-\frac{2m}{p}}.
\end{align}
Letting $n\to\infty$ in \eqref{equation:09} and noting that $$r_2^{2-m}\nu_*(B_{r_2}(0))=r_1^{2-m}\nu_*(B_{r_1}(0)),$$ we get
\begin{equation}
\lim_{n\to\infty}\int_{B_2(0)}|\frac{\partial \widetilde{u}_n^1}{\partial |x|}|^2dx=0.
\end{equation}

Similarly, since $\nu_{*y,r}=\nu_*$ for any $y\in\Sigma_*$ and $r>0$, we also have
\begin{equation}
\lim_{n\to\infty}\int_{B_2(0)}|\frac{\partial \widetilde{u}_n^1}{\partial |x-y|}|^2dx=0,\ for\ y\in \Sigma_*\cap B_2.
\end{equation}
These implies,
\begin{equation}\label{equation:10}
\lim_{n\to\infty}\sum_{k=1}^{m-2}\int_{B_2(0)}|\frac{\partial \widetilde{u}_n^1}{\partial x^k}|^2dx=0.
\end{equation}

Let $x'=(x_1,...,x_{m-2}),x''=(x_{m-1},x_m)$, define $f_n:B^{m-2}_{1}\to\R$ by $$f_n(x'):=\sum_{k=1}^{m-2}\int_{B^2_{1}(0)}|\frac{\partial \widetilde{u}_n^1}{\partial x_k}|^2(x',x'')dx''.$$ Then, \eqref{equation:10} tells us $$\lim_{n\to\infty}\|f_n(x')\|_{L^1(B^{m-2}_{1}(0))}=0.$$ Denote $M(f_n)(x')$ as the Hardy-Littlewood maximal function, i.e. $$M(f_n)(x)=\sup_{0<r<\frac{1}{2}}r^{2-m}\int_{B^{m-2}_r(x)}f_n(x')dx',\ x\in B^{m-2}_{1/2}(0).$$ By the weak $L^1-$estimate, for any $\rho>0$, we have
\begin{align*}
|\{x\in B^{m-2}_{1/2}(0)|M(f_n)>\rho\}|\leq\frac{C(m)}{\rho}\|f_n\|_{L^1(B^{m-2}_{1/2}(0))},
\end{align*}
which implies
\begin{align*}
|\{x\in B^{m-2}_{1/2}(0)|\limsup_{n\to\infty}M(f_n)>0\}|=0.
\end{align*}

Combing this with Theorem \ref{thm:main-1}, we know there exists a sequence of points $\{x'_n\in B^{m-2}_{1/2}(0)\}$, such that $(\widetilde{u}^1_n,\widetilde{v}_n^1)$ is smooth near $(x_n',x'')$ for all $x''\in B^2_{1}(0)$ and
\begin{equation}\label{equation:07}
\lim_{n\to\infty}M(f_n)(x'_n)=0.\end{equation}

By the blow-up argument in \cite{Lin}, we can find sequences $\{\sigma_n\}$ and $\{x''_n\}\subset B^2_{1/2}(0)$ such that $\sigma_n\to 0$, $x''_n\to (0,0)$ and
\begin{equation}\label{equation:08}
\max_{x''\in B^2_{1/2}(0)}\sigma_n^{2-m}\int_{B^{m-2}_{\sigma_n}(x'_n)\times B^2_{\sigma_n}(x'')}|\nabla \widetilde{u}_n^1|^2dx=\frac{\epsilon_0^2}{C_2(m)},
\end{equation}
where the maximum is achieved at the point $x''_n$ and $C_2(m)>2^{m}$ is a positive constant to be determined later.

In fact, denote $$g_n(\sigma):=\max_{x''\in B^2_{1/2}(0)}\sigma^{2-m}\int_{B^{m-2}_{\sigma}(x'_n)\times B^2_{\sigma}(x'')}|\nabla \widetilde{u}_n^1|^2dx.$$ On the one hand, noting that $(u_n,v_n)$ is smooth near $x'_n\times B^2_{1}(0)$, then we have $$\lim_{\sigma\to 0}g_n(\sigma)=0.$$ On the other hand, for any $\sigma>0$, when $n$ is big enough, there must hold that $g_n(\sigma)\geq\frac{\epsilon_0^2}{2^m}$. For otherwise, by Theorem \ref{thm:small-energy-regularity}, $\widetilde{u}^1_n$ will converge strongly in $W^{1,2}$ to a constant map, which is contradict to $\widetilde{\mu}_n\to \nu_*$. Thus, there exists $\sigma_n$, such that $g_n(\sigma_n)=\frac{\epsilon_0^2}{C_2(m)}$ and we may assume the maximum is achieved at $x''_n$. Next, we show $\sigma_n\to 0$ and $x''_n\to (0,0)$.

If $\sigma_n\geq\delta>0$, by Corollary \ref{cor:monotonicity}, we have
\begin{align*}
\frac{\epsilon_0^2}{C_2(m)}=\limsup_{n\to\infty}g_n(\sigma_n)\geq\limsup_{n\to\infty}\left( g_n(\delta)-C(m,p,\lambda_1,\lambda_2,\Omega,\|\psi\|_{C^0(\partial\Omega)})(r_n\delta)^{2-\frac{2m}{p}}\right)\geq\frac{\epsilon_0^2}{2^m},
\end{align*}
which is a contradiction.

If $x''_n\to x''_0\in B^2_{1/2}(0)$ and $x_0''\neq (0,0)$, for any $\sigma<\frac{|x_0''|}{2}$,
\begin{align*}
\frac{\epsilon_0^2}{2^m}\leq\limsup_{n\to\infty}g_n(\sigma)\leq\sigma^{2-m}\nu_*(B^{m-2}_{1}(0)\times B^2_{2\sigma}(x_0''))=0.
\end{align*}
This is also a contradiction.

Let $x_n=(x'_n,x''_n)$ and $$(\widetilde{u}_n^2(x),\widetilde{v}^2_n(x)):=(\widetilde{u}_n^1(x_n+\sigma_nx),\widetilde{v}_n^1(x_n+\sigma_nx)).$$
Then $(\widetilde{u}_n^2(x),\widetilde{v}_n^2(x))$ is a stationary Lorentzian harmonic map defined on $B^{m-2}_{R_n}(0)\times B^2_{R_n}(0)$, where $R_n=\frac{1}{4\sigma_n}$ which tends to infinite as $n\to\infty$.

By \eqref{equation:07}, we have
\begin{align}\label{inequality:05}
&\lim_{n\to\infty}\sup_{0<R<R_n}R^{2-m}\int_{B^{m-2}_{R}(0)\times B^2_{R_n}(0)}\sum_{k=1}^{m-2}|\frac{\partial \widetilde{u}^2_n}{\partial x_k }|^2dx\notag\\
&=\lim_{n\to\infty}\sup_{0<R<R_n}(\sigma_nR)^{2-m}\int_{B^{m-2}_{\sigma_nR}(x'_n)\times B^2_{\sigma_n R_n}(x_n'')}\sum_{k=1}^{m-2}|\frac{\partial \widetilde{u}_n^1}{\partial x_k }|^2dx\notag\\
&\leq\lim_{n\to\infty}M(f_n)(x_n')=0.
\end{align}

By \eqref{equation:08}, we get
\begin{align}\label{inequality:06}
\frac{\epsilon_0^2}{C_2(m)}=\int_{B^{m-2}_{1}(0)\times B^2_{1}(0)}|\nabla \widetilde{u}^2_n|^2dx=\max_{x''\in B^2_{R_n-1}(0)}\int_{B^{m-2}_{1}(0)\times B^2_{1}(x'')}|\nabla \widetilde{u}^2_n|^2dx.
\end{align}

By Corollary \ref{cor:monotonicity}, for any $R>0$, we obtain
\begin{align}\label{inequality:07}
\int_{B^{m-2}_{R}(0)\times B^2_{R}(0)}|\nabla \widetilde{u}^2_n|^2dx&=(\sigma_n)^{2-m}\int_{B^{m-2}_{\sigma_nR}(x'_n)\times B^2_{\sigma_n R}(x_n'')}|\nabla \widetilde{u}_n^1 |^2dx\notag\\
&\leq C(m,\lambda_1,\lambda_2,\delta_0,\Lambda,\Omega,\|\psi\|_{C^0(\partial \Omega)})R^{m-2},
\end{align}
when $n$ is big enough.

Let $\zeta\in C^\infty_0(B^{m-2}_1(0))$ and $\eta\in C^\infty_0(B^{2}_1(0))$ be two cut-off functions such that $0\leq\zeta\leq 1$, $\zeta|_{B^{m-2}_{1/2}(0)}\equiv 1$, $0\leq\zeta\leq 1$ and $\eta|_{B^{2}_{1/2}(0)}\equiv 1$. Similar to \cite{Lin}, for any $R>0$, we define $F_n(a):B^{m-2}_6(0)\times B^{2}_{R}(0)\to\R$ as follows: $$F_n(a)=\int_{B^{m-2}_1(0)\times B^{2}_1(0)}|\nabla \widetilde{u}_n^2|^2(a+x)\zeta(x')\eta(x'')dx.$$ Computing directly, one has
\begin{align*}
\frac{\partial F_n(a)}{\partial a_k}&=\int_{B^{m-2}_1(0)\times B^{2}_1(0)}\frac{\partial}{\partial x_k}|\nabla \widetilde{u}_n^2|^2(a+x)\zeta(x')\eta(x'')dx\\
&=2\int_{B^{m-2}_1(0)\times B^{2}_1(0)}\langle \frac{\partial \widetilde{u}^2_n}{\partial x_l}, \frac{\partial^2 \widetilde{u}^2_n}{\partial x_l\partial x_k}\rangle(a+x)\zeta(x')\eta(x'')dx\\
&=-2\int_{B^{m-2}_1(0)\times B^{2}_1(0)}\langle \Delta \widetilde{u}^2_n, \frac{\partial \widetilde{u}^2_n}{\partial x_k}\rangle(a+x)\zeta(x')\eta(x'')dx\\&\quad-2\int_{B^{m-2}_1(0)\times B^{2}_1(0)}\langle \frac{\partial \widetilde{u}^2_n}{\partial x_l}, \frac{\partial \widetilde{u}^2_n}{\partial x_k}\rangle(a+x)\frac{\partial}{\partial x_l}(\zeta(x')\eta(x''))dx.
\end{align*}

On the one hand, by \eqref{equation:LHM}, we have
 \begin{align*}
&-2\int_{B^{m-2}_1(0)\times B^{2}_1(0)}\langle \Delta \widetilde{u}_n^2, \frac{\partial \widetilde{u}_n^2}{\partial x_k}\rangle(a+x)\zeta(x')\eta(x'')dx\\
&=-2\int_{B^{m-2}_1(0)\times B^{2}_1(0)}\langle B^\top(\widetilde{u}_n^2)|\nabla \widetilde{v}_n^2|^2, \frac{\partial \widetilde{u}_n^2}{\partial x_k}\rangle(a+x)\zeta(x')\eta(x'')dx\\
&\leq
C(\int_{B^{m-2}_{R+1}(0)\times B^{2}_{R+1}(0)}|\nabla \widetilde{v}_n^2|^4dx)^{1/2}(\int_{B^{m-2}_{R+1}(0)\times B^{2}_{R+1}(0)}|\frac{\partial \widetilde{u}_n^2}{\partial x_k}|^2dx)^{1/2}.
 \end{align*}

On the other hand, by Holder's inequality, one has
\begin{align*}
&-2\int_{B^{m-2}_1(0)\times B^{2}_1(0)}\langle \frac{\partial \widetilde{u}_n^2}{\partial x_l}, \frac{\partial \widetilde{u}_n^2}{\partial x_k}\rangle(a+x)\frac{\partial}{\partial x_l}(\zeta(x')\eta(x''))dx\\
&\leq
C(\int_{B^{m-2}_{R+1}(0)\times B^{2}_{R+1}(0)}|\nabla \widetilde{u}_n^2|^2dx)^{1/2}(\int_{B^{m-2}_{R+1}(0)\times B^{2}_{R+1}(0)}|\frac{\partial \widetilde{u}_n^2}{\partial x_k}|^2dx)^{1/2}.
\end{align*}
Combing these together and letting $n\to\infty$, we obtain
\begin{align*}
\frac{\partial F_n(a)}{\partial a_k}\to 0,\ k=1,...,m-2,
\end{align*}
uniformly in $B^{m-2}_2(0)\times B^{2}_{R}(0)$ for any fixed $R>0$.

Thus, for any $a=(a',a'')=B^{m-2}_6(0)\times B^{2}_{R}(0)$,
\begin{align*}
\int_{B^{m-2}_{1/2}(a')\times B^{2}_{1/2}(a'')}|\nabla\widetilde{u}_n^2|^2dx&\leq F_n(a)\\
&\leq F_n((0,a''))+C(m)\sum_{k=1}^{m-2}|\frac{\partial F_n(a)}{\partial a_k}|\\
&\leq \int_{B^{m-2}_{1}(0)\times B^{2}_{1}(a'')}|\nabla\widetilde{u}_n^2|^2dx+C(m)\sum_{k=1}^{m-2}|\frac{\partial F_n(a)}{\partial a_k}|\\
&\leq\frac{\epsilon_0^2}{C_2(m)}+C(m)\sum_{k=1}^{m-2}|\frac{\partial F_n(a)}{\partial a_k}|.
\end{align*}

Therefore, when $n$ is big enough, we have
\begin{align}
6^{2-m}\int_{B^{m-2}_6(0)\times B^2_6(0)}|\nabla\widetilde{u}_n^2|^2(x',x''+b)dx\leq \frac{C(m)\epsilon_0^2}{C_2(m)} \ for \ all \ b\in B^2_{R}(0).
\end{align}
Taking $C_2(m)\geq 2^{m}C(m)$, by Corollary \ref{cor:monotonicity}, we have
\begin{align*}
&\sup_{x_0\in B_3(0),0<r\leq 3}r^{2-m}\int_{B_r(x_0)}|\nabla\widetilde{u}_n^2|^2(x',x''+b)dx\\
&\leq
\sup_{x_0\in B_3(0)}3^{2-m}\int_{B_3(x_0)}|\nabla\widetilde{u}_n^2|^2(x',x''+b)dx+C(m,p,\lambda_1,\lambda_2,\Omega,\|\psi\|_{C^0(\partial M)})(\sigma_nr_n)^{2-\frac{2m}{p}}\\
&\leq
2^{m-2}6^{2-m}\int_{B^{m-2}_6(0)\times B^2_6(0)}|\nabla\widetilde{u}_n^2|^2(x',x''+b)dx+C(m,p,\lambda_1,\lambda_2,\Omega,\|\psi\|_{C^0(\partial M)})(\sigma_nr_n)^{2-\frac{2m}{p}}\\&\leq \frac{2^{m-2}C(m)\epsilon_0^2}{C_2(m)}+C(m,p,\lambda_1,\lambda_2,\Omega,\|\psi\|_{C^0(\partial M)})(\sigma_nr_n)^{2-\frac{2m}{p}}\leq \frac{\epsilon_0^2}{2},
\end{align*}
for some $m<p<\infty$, whenever $n$ is large enough.

By Theorem \ref{thm:small-energy-regularity}, we know $(\widetilde{u}_n^2,\widetilde{v}_n^2)$ sub-converges to a Lorentzian harmonic map $(\widetilde{u},\widetilde{v})$ in $C^1_{loc}(B^{m-2}_{3/2}(0)\times \R^2)$. Moreover, by \eqref{inequality:05}-\eqref{inequality:07}, for any $R>0$, we have
\begin{align*}
\int_{B_R(0)}\sum_{k=1}^{m-2}|\frac{\partial \widetilde{u}}{\partial x_k}|^2dx=0,
\end{align*}
and
\begin{align*}
\int_{B_1(0)}|\nabla \widetilde{u}|^2dx=\frac{\epsilon_0^2}{C_2(m)},\quad       \int_{B_R(0)}|\nabla \widetilde{u}|^2dx\leq C(m,\lambda_1,\lambda_2,\delta_0,\Lambda,\Omega,\|\psi\|_{C^0(\partial \Omega)})R^{m-2}.
\end{align*}

Furthermore, since
\begin{align*}
\int_{B_R(0)}|\nabla\widetilde{v}|^2dx=\lim_{n\to\infty}\int_{B_R(0)}|\nabla\widetilde{v}^2_n|^2dx\leq\lim_{n\to\infty} (r_n\sigma_n)^{2-\frac{2m}{p}}R^{m(1-\frac{2}{p})}\|\nabla v_n\|_{L^p}^2=0,
\end{align*}
we know $\widetilde{v}$ is a constant and $\widetilde{u}:\R^2\to N$ is a nonconstant harmonic map with finite energy. By the conformal theory of harmonic map in dimension two, $\widetilde{u}$ can be extended to a nonconstant harmonic sphere.
\end{proof}

\

\begin{proof}[\textbf{Proof of Theorem \ref{thm:main-2}}]
The conclusion of Theorem \ref{thm:main-2} standardly follows from Lemma \ref{lem:-03} and the Federer dimensions reduction argument which is  similar to \cite{Schoen-Uhlenbeck} for minimizing harmonic maps. We omit the details here. This completes the proof.
\end{proof}

\

\vskip 0.2cm


\providecommand{\bysame}{\leavevmode\hbox to3em{\hrulefill}\thinspace}
\providecommand{\MR}{\relax\ifhmode\unskip\space\fi MR }
\providecommand{\MRhref}[2]{%
  \href{http://www.ams.org/mathscinet-getitem?mr=#1}{#2}
}
\providecommand{\href}[2]{#2}

\end{document}